\author{Benjamin Steinhurst}
\title{Uniqueness of Locally Symmetric Brownian Motion on Laakso Spaces}
\newcommand{\C}[1]{\ensuremath{\mathcal{#1}}}
\newcommand{\B}[1]{\ensuremath{\mathbb{#1}}}
\newcommand{\F}[1]{\ensuremath{\mathfrak{#1}}}
\newtheorem{theorem}{Theorem}[section]
\newtheorem{cor}{Corollary}[section]
\newtheorem{lemma}{Lemma}[section]
\newtheorem{prop}{Proposition}[section]
\newtheorem{definition}{Definition}[section]
\numberwithin{equation}{section}
\date{\today\thanks{Research supported in part by the NSF grant DMS-0505622.}}
\begin{document}

\begin{abstract}
We consider the spaces introduced by Laakso in 2000 and, building on the work of Barlow, Bass, Kumagai, and Teplyaev, prove the existence and uniqueness of a local symmetry invariant diffusion via heat kernel estimates. This work also builds upon works of Cheeger, Barlow and Bass, as well as the author.

MSC Codes: 60G18, 60J35, 60J60, 28A80
%\tableofcontents
\end{abstract}

\maketitle

\begin{center}{\small
Contact:\\
steinhurst@math.cornell.edu\\
Department of Mathematics\\
Cornell University\\
Ithaca~NY~14850~USA}\end{center}

\section{Introduction}
This paper seeks to take the methods used in \cite{BBKT2010} to show the uniqueness of locally symmetric Brownian motion on the Sierpinski carpet and apply them to Brownian motion on Laakso spaces, simplifying them where the geometry of Laakso spaces allow. These spaces were originally defined by Laakso in \cite{Laakso2000}, given an alternate construction in \cite{Steinhurst2010,RomeoSteinhurst2009} based on \cite{BarlowEvans2004}, and the Laplacian analyzed in \cite{Steinhurst2010,RomeoSteinhurst2009}.

In Section \ref{sec:laakso} the family of Laakso spaces is defined and the local symmetries defined. Then Section in \ref{sec:processes} the probabilistic arguments needed to prove the Elliptic Harnack Inequality (EHI) for harmonic functions on Laakso spaces are proven concluding with the proof of the EHI. From the EHI heat kernel estimates are proven in Section \ref{sec:hkest}. Section \ref{sec:uniq} contains Theorem \ref{thm:onedim}, the main result of this paper, that local symmetry invariant Dirichlet forms are equal up to scalar multiple. Finally in Section \ref{sec:MGUG-Lap} the existence of a local symmetry invariant Dirichlet form is shown by taking the Dirichlet form analyzed in \cite{Steinhurst2010} which is then shown to be invariant. The outline of this argument, and indeed many of the proofs are based on those in \cite{BBKT2010} in which the same result was shown for generalized Sierpinski carpets.  

{\bf Acknowledgments:} The author thanks Alexander Teplyaev, Jun Kigami and Naotaka Kajino for their helpful suggestions.

\subsection{Generalities of Dirichlet Forms}\label{ssec:generalities}

Let $(X,m,d)$ be a metric measure space. We denote a regular Dirichlet form on $L^{2}(X,m)$ by $(\C{E},\C{F})$. By this we mean that \C{E} is a bilinear, closed, symmetric form with the Markov property with domain $\C{F} \subset L^{2}(X,m)$ such that $C_0(X) \subset \C{F}$ densely and $\C{F} \cap C_0(X) \subset C_0(X)$ densely in the respective topologies. 

The semi-group of operators $T_t$ on $L^{2}(X,m)$ associated to \C{E} is the semi-group whose strong derivative at zero is the same self-adjoint operator as the generator of \C{E}. If $A$ is the non-positive self-adjoint operator that is the derivative of $T_t$ then we have
\begin{equation}
	\C{E}(u,v) = \langle \sqrt{-A}u,\sqrt{-A}v) \rangle \hspace{1cm} \text{and} \hspace{1cm} T_t(u) = e^{tA}(u),
\end{equation}
for all $u \in \C{F}$. There is a large literature on Dirichlet forms and specifically their connection with self-adjoint operators resolvents, semi-groups of operators, and Hunt processes. The standard reference used in this paper is \cite{FOT1994}. 

\begin{definition} Assume that $m(X) < \infty$. Let $(\C{E},\C{F})$ be a regular Dirichlet form. 
\begin{enumerate}
	\item The form \C{E} is local if for any $u,v \in \C{F}$ with disjoint compact support $\C{E}(u,v) = 0$.
	\item The form \C{E} is conservative if the associated semi-group satisfies $T_t 1 = 1\ a.e.$ for all $t > 0$.
	\item The form \C{E} is irreducible if the only set of positive measure which is invariant under $T_t$ is $X$. 
\end{enumerate}
\end{definition}

The following type of linear combination is going to be of reoccuring interest throughout the paper. Specifically in Theorems \ref{thm:hilbertbound} and \ref{thm:onedim} where it is shown that a particular family of Dirichlet forms closed under this type of linear combination is one-dimensional.

\begin{theorem}[\cite{BBKT2010}]\label{thm:combo}
Suppose that $(\C{A},\C{F})$ and $(\C{B},\C{F})$ are local, regular, conservative, irreducible Dirichlet forms on $L^{2}(F,m)$ and that 
\begin{equation}
	\C{A}(u,u) \le \C{B}(u,u)
\end{equation}
for all $u \in \C{F}$. For any  $\delta >0$, set $\C{E} = (1+\delta)\C{B} - \C{A}$. Then $(\C{E},\C{F})$ is a local, regular, conservative, irreducible Dirichlet form on $L^{2}(F,m)$.
\end{theorem}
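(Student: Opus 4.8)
The plan is to verify, one at a time, the defining properties of a local, regular, conservative, irreducible Dirichlet form for $\C{E} = (1+\delta)\C{B} - \C{A}$, with the Markov property the only genuinely substantive point. The starting observation is the two-sided comparison of forms: since $\C{A}(u,u) \le \C{B}(u,u)$,
\[
	\delta\, \C{B}(u,u) \le \C{E}(u,u) \le (1+\delta)\C{B}(u,u), \qquad u \in \C{F}.
\]
In particular $\C{E} \ge 0$, it is symmetric and bilinear as a linear combination of such forms, and its canonical norm $\C{E}_1(u,u) = \C{E}(u,u) + \|u\|_{L^2}^2$ is equivalent to the $\C{B}_1$-norm on $\C{F}$.

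From this norm equivalence closedness and regularity are immediate: $(\C{F},\C{B}_1)$ is complete and $\C{F}\cap C_0(F)$ is $\C{B}_1$-dense in $\C{F}$ and uniformly dense in $C_0(F)$ because $\C{B}$ is a regular Dirichlet form, and equivalent norms share completeness and the same dense subsets. Locality is immediate from bilinearity: if $u,v$ have disjoint compact supports then $\C{A}(u,v) = \C{B}(u,v) = 0$, hence $\C{E}(u,v) = 0$. Conservativeness follows because $\C{A}$ and $\C{B}$ are conservative on the finite-measure space $F$, so $1 \in \C{F}$ with $\C{A}(1,1) = \C{B}(1,1) = 0$; hence $\C{E}(1,1) = 0$, so $\sqrt{-A^{\C{E}}}\,1 = 0$ and $T_t^{\C{E}}1 = 1$. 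Irreducibility follows from the lower bound above: if $\C{E}(u,u) = 0$ then $\C{B}(u,u) = 0$, so $u$ is constant by irreducibility of $\C{B}$, and for a conservative Dirichlet form this characterizes irreducibility.

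The core of the argument is the Markov property, namely that the unit contraction $\bar u = (0\vee u)\wedge 1$ satisfies $\C{E}(\bar u,\bar u) \le \C{E}(u,u)$; here the subtraction of $\C{A}$ creates the real difficulty, since both $\C{B}(u,u) - \C{B}(\bar u,\bar u)$ and $\C{A}(u,u) - \C{A}(\bar u,\bar u)$ are non-negative but enter with opposite signs. The plan is to pass to energy measures. First, local together with conservative forces both $\C{A}$ and $\C{B}$ to be strongly local: in the Beurling--Deny decomposition the jump part vanishes by locality and the killing part vanishes because $\C{A}(1,1) = \C{B}(1,1) = 0$. Hence each admits an energy measure $\mu^{\C{A}}_{\langle u\rangle}$, $\mu^{\C{B}}_{\langle u\rangle}$ with $\C{A}(u,u) = \tfrac12\mu^{\C{A}}_{\langle u\rangle}(F)$, obeying the chain rule $\mu_{\langle \phi(u)\rangle} = \phi'(u)^2\,\mu_{\langle u\rangle}$ for Lipschitz $\phi$. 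The key lemma I would isolate and prove is the \emph{domination of energy measures}: the scalar inequality $\C{A}(u,u) \le \C{B}(u,u)$ for all $u$, combined with strong locality, upgrades to the measure inequality $\mu^{\C{A}}_{\langle u\rangle} \le \mu^{\C{B}}_{\langle u\rangle}$. Granting this, set $\mu^{\C{E}}_{\langle u\rangle} = (1+\delta)\mu^{\C{B}}_{\langle u\rangle} - \mu^{\C{A}}_{\langle u\rangle}$, a non-negative measure with $\C{E}(u,u) = \tfrac12\mu^{\C{E}}_{\langle u\rangle}(F)$. The chain rule applied to the $1$-Lipschitz map $\phi(t) = (0\vee t)\wedge 1$ then gives $\mu^{\C{E}}_{\langle \bar u\rangle} = \phi'(u)^2\,\mu^{\C{E}}_{\langle u\rangle} \le \mu^{\C{E}}_{\langle u\rangle}$, since $|\phi'|\le 1$ and $\mu^{\C{E}}_{\langle u\rangle}\ge 0$, and integrating over $F$ yields $\C{E}(\bar u,\bar u) \le \C{E}(u,u)$.

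I expect the domination $\mu^{\C{A}}_{\langle u\rangle}\le\mu^{\C{B}}_{\langle u\rangle}$ to be the main obstacle: upgrading a single scalar inequality of forms to a pointwise inequality of measures is precisely where strong locality must be used in an essential way, and it requires care in localizing the form inequality to functions concentrated near an arbitrary target set and in the attendant approximation of $u$ by bounded functions. Every other property reduces to the elementary two-sided comparison and standard facts about Dirichlet forms on a space of finite measure. I would also remark that the simple fibered geometry of Laakso spaces should make the relevant energy measures quite explicit, so that in the present setting the domination step may be verifiable directly rather than through the abstract localization argument.
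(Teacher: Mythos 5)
Your architecture is exactly the one the paper implicitly relies on: the paper offers no real proof of this theorem (its ``proof'' is a two-line comment deferring everything of substance to Theorem 2.1 of \cite{BBKT2010}), and the BBKT proof is precisely your energy-measure argument. Your handling of the routine properties is correct and more careful than the paper's comment, which wrongly suggests the Markov property is ``immediate from the definitions'': the two-sided bound $\delta\,\C{B}(u,u) \le \C{E}(u,u) \le (1+\delta)\C{B}(u,u)$, closedness and regularity via norm equivalence, locality via bilinearity, conservativity via $1 \in \C{F}$ and $\C{E}(1,1)=0$ (using finiteness of the measure), and irreducibility via the lower bound $\C{E} \ge \delta\C{B}$ are all right, modulo standard facts about invariant sets of conservative forms. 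Likewise your reduction of the Markov property to non-negativity of the signed measure $\mu^{\C{E}}_{\langle u\rangle} = (1+\delta)\mu^{\C{B}}_{\langle u\rangle} - \mu^{\C{A}}_{\langle u\rangle}$, followed by the chain rule for the unit contraction, is exactly how BBKT argue.

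The genuine gap is the one you yourself flag: the domination lemma $\mu^{\C{A}}_{\langle u\rangle} \le \mu^{\C{B}}_{\langle u\rangle}$ is stated and then ``granted,'' but it is the entire mathematical content of the theorem --- everything else is bookkeeping. Worse, the method you gesture at (localizing the form inequality to functions concentrated near an arbitrary target set) runs into a real obstruction: cutting $u$ off near a set with $\phi \in \C{F}\cap C_0$ produces, via the Leibniz rule, the term $\int u^2\, d\mu_{\langle\phi\rangle}$ over the transition region, and this cutoff energy does not tend to zero as the neighborhood shrinks (it behaves like the capacity of a thin annulus), so the error never closes. The proof that works, and the one in \cite{BBKT2010}, goes through the chain rule instead: writing $\C{C} = \C{B}-\C{A} \ge 0$, one applies total-mass non-negativity to compositions of the form $g(u)\,\theta(w)$ with $w \in \C{F}\cap C_0$ and $g$ a sawtooth of slope $\pm 1$ and amplitude $\epsilon$; since energy measures of strongly local forms do not charge level sets, $g'(u)^2 = 1$ a.e., while the cross terms and the $\mu^{\C{C}}_{\langle w\rangle}$ term carry a factor of $\epsilon$. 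Letting $\epsilon \downarrow 0$ gives $\int \theta(w)^2\, d\mu^{\C{C}}_{\langle u\rangle} \ge 0$, and regularity (uniform density of $\C{F}\cap C_0$ in $C_0$) then upgrades this to $\mu^{\C{C}}_{\langle u\rangle} \ge 0$. So: right skeleton, matching the cited source, but the step you would still have to supply is the theorem, and the route you propose for it would fail.
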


\emph{Comment on proof:}  All of the properties except the conservativity and irreducibility follow immediately from the definitions. That \C{E} is conservative follows from \C{A} and \C{B} being conservative and the compactness of $L$. The argument for irreducibility is found in the proof of Theorem 2.1 in \cite{BBKT2010}.

\section{Laakso Spaces}\label{sec:laakso}
Laakso spaces were introduced in \cite{Laakso2000} in order to exhibit metric measure spaces of every Hausdorff dimension greater than one that all support a Poincar\'e inequality.  Because of the existence of an entire family of spaces it is possible to speak of those properties which hold for all Laakso spaces and those properties which depend on the member of the family that is under consideration. For example the metric on the Cantor sets used in the construction will depend on the desired Hausdorff dimension while the essentially one dimensional behavior of the Laplacian that will be defined in Section \ref{sec:MGUG-Lap} does not. To be able to define Laakso spaces we first give definitions for several objects. We then prove a few properties of Laakso spaces.

\subsection{Construction of Laakso Spaces}\label{ssec:DefLaakso}
Let $K$ be a Cantor set with its elements denoted by infinite sequences of $0$'s and $1$'s. For an integer $j \ge 2$ and a sequence $j_i \in \{j, j+1\}$ define
\begin{equation}
	d_N = \prod_{j=1}^{N} j_i \hspace{1in} L_N = \left\{ \frac{i}{d_n} \right\}_{i=1}^{d_N-1}.
\end{equation}
Shortly $d_N$ will be used in relation to the diameter of the cells to be defined below while $L_N$ will indicate where identifications will be made. 

\begin{definition}\label{def:trans} We set notation for three functions on Cantor sets.
\begin{itemize}
	\item Let $T_n$ act on elements of a Cantor set by transposing the $n$'th digit in the infinite sequence representation. The operator $T_n$ can be extended to products of Cantor set $K^{k}$ by letting $T_n$ operate on each copy of $K$ separately. 
	\item Let $\sigma:K \rightarrow K$ be the shift function mapping so that for $S \in \C{S}_1$ $\sigma(S) = K$.
	\item Let $\psi_a:K \rightarrow K$ be the contraction map sending $K$ to the subset of elements starting with the word $a$. 
\end{itemize}
\end{definition}

\begin{definition}\label{def:laakso}
Set $\sim_{n}$ to be an equivalence relation on $I \times K^{k}$ by identifying points $(x,w), (x,T_n w) \in I \times K^{k}$ for $x \in L_n \setminus L_{n-1}$. Denote by $\sim$ the union of the equivalence relations, $\sim_n$. Then $\sim$ is again an equivalence relation because the sets $L_n \setminus L_{n-1}$ are disjoint over $n$. A Laakso space is defined as $L= I \times K^{k} / \sim$. Denote the map $I \times K^{k} \mapsto I \times K^{k} / \sim = L$ by $\iota$, it is called the wormhole map. 
\end{definition}

The data determining a Laakso space are $\{j_i\}$ and $k$. In \cite{RomeoSteinhurst2009,BegueEtAl2009} the spectral properties of a Laplacian are given explicitly using only this data.

\begin{prop}\label{prop:laaksoproperties}
Any Laakso space is metrizable with a geodesic metric. Relative to the geodesic metric the Hausdorff dimension of $L$ is
\begin{equation}
	Q = dim_H(L) = \lim_{l \rightarrow \infty} \frac{\log(2^{lk}d_l)}{\log(d_l^{-1})}
\end{equation}
whenever this limit exists. If $K$ is given a Bernoulli measure and and $I$ the Lebesgue measure, then $L$ inherits a measure from $I \times K^{k}$. The inherited measure and the Hausdorff measure relative to the geodesic metric are equivalent. 
\end{prop}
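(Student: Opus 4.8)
The plan is to treat the four assertions in turn, with the metric and the ball-volume estimates doing most of the work. First I would construct the candidate metric as the quotient pseudometric induced by $\iota$: equip $I$ with Euclidean length, each factor $K$ with its chosen ultrametric (scaled so that level-$l$ cylinders have diameter $\asymp d_l^{-1}$, matching the subdivision of $I$ by $L_l$), and $I\times K^{k}$ with the resulting product length metric $\rho$; then set $d_{L}([p],[q])$ to be the infimum of $\sum_i \rho(p_i,q_i)$ over finite chains whose consecutive endpoints are $\sim$-identified. Symmetry and the triangle inequality are immediate, so the content is nondegeneracy. Since each identification set $L_n\setminus L_{n-1}$ is a finite subset of $I$ and $\sim_n$ only glues fibers over those points, any chain witnessing $d_L([p],[q])=0$ forces $p,q$ to share an $I$-coordinate and to be linked through the finitely many involutions $T_n$ available there; the ultrametric on $K^{k}$ then separates the remaining coordinates, so $[p]=[q]$.

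For the geodesic property I would verify that $(L,d_L)$ is a complete, locally compact length space and invoke the metric Hopf--Rinow--Cohn-Vossen theorem. Completeness and local compactness descend from those of $I\times K^{k}$ (a product of a compact interval with a compact Cantor power) together with finiteness of each equivalence class, and $d_L$ is intrinsic by construction, so geodesics between any two points exist. Concretely the geodesics travel along the $I$-direction and may switch Cantor sheets only at the wormholes, which is exactly the picture needed in later sections.

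The dimension and the measure claims I would prove together through Ahlfors $Q$-regularity, which simultaneously identifies $\dim_H$ and forces $\C{H}^{Q}\asymp\mu_L$. I would first push the product measure $\mu=\lambda\times\nu^{k}$ ($\lambda$ Lebesgue on $I$, $\nu$ the Bernoulli measure on each $K$) forward under $\iota$; because each $L_n\setminus L_{n-1}$ is finite, the glued fibers are $\mu$-null, so $\mu_L:=\iota_*\mu$ is well defined and $\iota$ is measure preserving off a null set. At level $l$ the space is partitioned into cells of diameter $\asymp d_l^{-1}$, each the $\iota$-image of an interval of length $d_l^{-1}$ times a level-$l$ cylinder of $K^{k}$, so $\mu_L(\text{cell})\asymp d_l^{-1}2^{-lk}$ and the number of cells is $\asymp 2^{lk}d_l$; this is exactly the growth rate whose logarithms give the exponent recorded in the statement. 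Estimating $\mu_L(B(x,r))$ for $r\asymp d_l^{-1}$ by the measure of a uniformly bounded number of such cells yields $\mu_L(B(x,r))\asymp r^{Q}$, and Ahlfors regularity then gives $\dim_H(L)=Q$ together with $\C{H}^{Q}\asymp\mu_L$ by the mass distribution principle.

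The main obstacle is this ball-volume estimate, i.e.\ controlling $\mu_L(B(x,r))$ uniformly over all centers and scales. The difficulty is purely geometric: near a wormhole a metric ball can meet several glued Cantor sheets, so a ball of radius $r\asymp d_l^{-1}$ is not a single cell but a bounded union of cells, and one must show the number of sheets reachable within radius $r$ is controlled by the branching factor $2^{lk}$ and no larger. Handling the anisotropy between the Euclidean $I$-direction and the ultrametric Cantor directions, and establishing the two-sided bound uniformly in $l$ rather than only along the diagonal sequence $r=d_l^{-1}$, is where the care is needed; interpolating across consecutive levels $d_{l+1}^{-1}\le r\le d_l^{-1}$ then completes the estimate.
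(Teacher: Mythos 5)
Your route is genuinely different from the paper's: the paper disposes of the first claim by citing Laakso's original construction of the geodesic metric, and gets the dimension formula from Igudesman's theorem on lacunary self-similar sets applied to a single-valued branch of $\iota^{-1}(L)$, transferred to $L$ by the observation that $\iota$ is $1$-bi-Lipschitz. Your more self-contained plan (explicit quotient metric plus Ahlfors regularity and the mass distribution principle) is attractive, but as written it has two genuine gaps. First, the sentence ``$d_L$ is intrinsic by construction'' is unjustified, and this is exactly where the geodesic claim lives. Your $\rho$ is not a length metric --- the Cantor factor is totally disconnected, so no rectifiable path joins distinct sheets --- and the chain-infimum quotient of a non-length metric need not be intrinsic: a chain may simply pay the ultrametric cost of a jump between sheets instead of travelling to a wormhole, and whether such a jump can be strictly cheaper than every admissible path depends on the precise normalization you left at the level of $\asymp$. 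For instance, if $x$ lies at a point of $L_{n-1}$, the nearest wormhole of $L_n\setminus L_{n-1}$ is at distance $d_n^{-1}$, so any path flipping the $n$-th digit has length at least $2d_n^{-1}$, while a jump costs only the diameter of the smallest cylinder separating the two sheets; if the scaling makes that diameter $\asymp d_n^{-1}$, the jump undercuts all paths and $d_L$ is merely bi-Lipschitz to the geodesic metric, so Hopf--Rinow cannot be applied to it. The robust fix is to define the distance as a path metric from the outset (equivalently, run the quotient construction with chains forced through wormholes, i.e.\ with infinite cost assigned to changing the Cantor coordinate away from an identification); your completeness and local compactness argument then goes through. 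Since Hausdorff dimension and volume regularity are bi-Lipschitz invariant, this gap contaminates only the geodesic assertion, not the rest of your plan.

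Second, the estimate you yourself flag as the main obstacle --- the uniform two-sided bound $\mu_L(B(x,r))\asymp r^{Q}$ --- is never actually carried out, and under the stated hypothesis it is not available. Ahlfors $Q$-regularity requires $2^{lk}d_l\asymp d_l^{Q}$ with constants uniform in $l$, whereas the existence of the limit defining $Q$ only yields $2^{lk}d_l=d_l^{Q+o(1)}$; if the $o(1)$ term decays slowly (which arbitrary admissible choices of $j_i\in\{j,j+1\}$ permit), then $\C{H}^{Q}(L)$ can be $0$ or $+\infty$, and both the comparability $\C{H}^{Q}\asymp\mu_L$ and the claimed equivalence of measures fail. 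So your unified argument does prove $\dim_H(L)=Q$ --- the $\epsilon$-slack mass distribution estimates suffice for that --- but it cannot deliver the measure statement in the generality you invoke it; you need either Laakso's specific choice of the digit sequence, which guarantees the uniform comparability, or a separate argument for the measure claim. Note that for the dimension the paper sidesteps precisely this issue: the geometric-mean (``lacunary'') hypothesis of Igudesman's theorem is exactly the ``whenever this limit exists'' condition, with no uniformity required.
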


\begin{proof} The first assertion is in \cite{Laakso2000}. The second claim follows from \cite{Igudesman2003} by noting that any single-valued branch of $\iota^{-1}(L )$ is a lacunary self-similar set as a subset of $\B{R}^{1+k}$ with the pull-back metric and  contraction factor $j_i^{-1}$ at each level. A lacunary self-similar set is one where the contraction ratios at each level have a geometric mean even if they are not periodic. The formula is an immediate consequence of Theorem 2 in \cite{Igudesman2003}. 
If $\{U\}$ is an open cover of a branch of $\iota^{-1}(L)$ then $\{(1+\delta)U\}$ is an open cover of $\iota^{-1}(L)$ for all $\delta > 0$. So $\iota^{-1}(L)$has the same Hausdorff dimension as any of the branches. 
%The union of the images of all of the single-valued branches of $\iota^{-1}$ is covered by the same open covers as a given branch their union has the same Hausdorff dimension. 
In this context $\iota$ is a $1-$bi-Lipschitz map from $\iota^{-1}(L) \rightarrow L$.
\end{proof}

To denote the distance between two points $x,y \in L$  we will use the notation $|x-y|$ even though there is no additive group structure on $L$. 

Many of the proofs in this paper follow closely the arguments in \cite{BBKT2010,BarlowBass1999}. The uniqueness result on locally symmetric Brownian motions on Laakso spaces have been shown to hold for generalized Sierpinski carpets in \cite{BBKT2010}. Many of the proofs used the specific geometry of generalized Sierpinski carpets to estimate the probability of processes hitting particular sets. These estimates were concerned with two types of trajectories: corner moves and slide moves. The corner move was the cause of most of the technical difficulties. Laakso spaces can be thought of as having slide moves, where a process moves from one piece of the boundary to another, adjacent piece of the boundary of a cell. In Section \ref{sec:processes} these moves will be mentioned again in describing which arguments carry over from the previous literature. 

\subsection{Symmetries of Laakso Spaces}\label{ssec:SymLaakso}
We will define a folding map that will be used to describe the symmetries of the Dirichlet forms on Laakso spaces. Before we define the folding map we first need a cell structure.

\begin{definition}\label{def:cellstructure}
Let $L_N=\{x_n\}_{n=1}^{d_N-1}$ to be the wormhole locations for all depths less than or equal to $N$ as in the Definition \ref{def:laakso} listed in increasing order. Let $K_j$, for $j = 1, \cdots, 2^{Nk}$, be the depth $N$ cells of $K^{k}$. Then a depth $N$ cell of $L$ is $S = \iota([x_i,x_{i+1}] \times K_j)$. The set of all such $N$-cells is called $\C{S}_N$.
\end{definition}

\begin{definition}\label{def:folding}
For $n \ge 0$ let $S \in \C{S}_n$ then $\iota^{-1}(S) = [x_i,x_{i+1}] \times K_a.$ Then taking $\overline{\varphi}_0(x) = \max\{0, \min\{ x, 1-x\}\}$extended periodically to all of \B{R} \cite[Definition 2.12]{BBKT2010} define $\varphi_S:[0,1] \rightarrow [x_i,x_{i+1}]$  by $\varphi_S(z) = x_i d_n^{-1} \overline{\varphi}_0(d_n(x-x_i))$. Define $K_S:K \rightarrow K_a$ by $K_S = \psi_a \circ \sigma^{n}$. Let $\phi_S$ be defined by 
\begin{equation}
	\phi_S = \iota \circ (\varphi_S,K_S) \circ \iota^{-1}.
\end{equation}
\end{definition}

\begin{lemma}\label{lem:phiprop}
\ 
\begin{enumerate}
	\item For every $n \ge 0$ and $S,S' \in \C{S}_n$ $\phi_S: S' \rightarrow S$ is an isometry.
	\item For every $n \ge 0$ and $S_1,S_2 \in \C{S}_n$ $\phi_{S_1}\circ \phi_{S_2} = \phi_{S_1}.$
	\item For every $n \ge 0$ and $x,y \in L$. If there exists $S_1 \in \C{S}_n$ such that $\phi_{S_1}(x) = \phi_{S_1}(y)$, then $\phi_{S}(x) = \phi_{S}(y)$ for every $S \in \C{S}_n$. 
	\item For every $n \ge 0$ let $S \in \C{S}_n$ and $S' \in \C{S}_{n+1}$. If $x,y \in L $ and $\phi_{S}(x)=\phi_{S}(y)$ then $\phi_{S'}(x)=\phi_{S'}(y).$
\end{enumerate}
\end{lemma}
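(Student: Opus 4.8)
The plan is to reduce every assertion to the two factors of the product $I\times K^{k}$ and to exploit the fact that $\phi_S$ is built, through $\iota$, from the interval folding $\varphi_S$ and the Cantor folding $K_S$. First I would record the two structural identities on which everything rests. On the Cantor factor, since $\psi_a$ prepends the length-$n$ word $a$ and $\sigma^{n}$ deletes the first $n$ digits, one has $\sigma^{n}\circ\psi_a=\mathrm{id}_K$ for any depth-$n$ word, and more generally $\sigma^{m}\circ\psi_a=\sigma^{m-n}$ when $m\ge n=|a|$. On the interval factor, $\overline{\varphi}_0$ is the standard tent folding onto $[0,\tfrac12]$, hence idempotent, and $\varphi_S$ is an affine identification of a depth-$n$ fundamental interval with $[x_i,x_{i+1}]$ precomposed with a scale-$d_n$ folding $\Pi_n$ that, up to this affine placement, does not depend on which depth-$n$ cell $S$ is chosen. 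The one geometric point to check here is that the reflection points of $\Pi_n$ sit exactly at the wormhole locations in $L_n$, so that $\phi_S$ descends to a well-defined map on the quotient $L$; this is the alignment built into Definition \ref{def:folding}.

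Granting this, write $\phi_S=\Theta_S\circ P_n$, where $P_n=\iota\circ(\Pi_n,\sigma^{n})\circ\iota^{-1}$ is a projection onto the depth-$n$ fundamental cell that does not depend on $S$, and $\Theta_S=\iota\circ(A_S,\psi_a)\circ\iota^{-1}$ is the identification of that fundamental cell $F$ with $S$. Part (1) then follows because $P_n$ restricted to a single cell $S'$ is exactly $\Theta_{S'}^{-1}$ (for $p=\Theta_{S'}(q)\in S'$ one has $P_n(p)=q$), so that $\phi_S|_{S'}=\Theta_S\circ\Theta_{S'}^{-1}$ is a composition of bijective isometries between cells; that each $\Theta$ is an isometry uses the affine slope-$\pm1$ action on the interval and the prefix-replacement action on $K$, both of which preserve the relevant distances, together with the $1$-bi-Lipschitz property of $\iota$ from Proposition \ref{prop:laaksoproperties}.

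For part (2) I would compute $\phi_{S_1}\circ\phi_{S_2}=\Theta_{S_1}\circ(P_n\circ\Theta_{S_2})\circ P_n=\Theta_{S_1}\circ P_n=\phi_{S_1}$, using $P_n\circ\Theta_{S_2}=\mathrm{id}_F$, which is precisely the pair of identities $\sigma^{n}\circ\psi_{a_2}=\mathrm{id}_K$ and $\Pi_n\circ A_{S_2}=\mathrm{id}$. Part (3) is then immediate from part (2) with the roles reversed: taking $\phi_S\circ\phi_{S_1}=\phi_S$ gives $\phi_S(x)=\phi_S(\phi_{S_1}(x))=\phi_S(\phi_{S_1}(y))=\phi_S(y)$ whenever $\phi_{S_1}(x)=\phi_{S_1}(y)$. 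For part (4) I would establish the cross-depth composition identity $\phi_{S'}\circ\phi_S=\phi_{S'}$ for $S\in\C{S}_n$ and $S'\in\C{S}_{n+1}$: on the Cantor factor $K_{S'}\circ K_S=\psi_{a'}\sigma^{n+1}\psi_a\sigma^{n}=\psi_{a'}\sigma\,\sigma^{n}=\psi_{a'}\sigma^{n+1}=K_{S'}$ by the shift identity above, and on the interval factor the analogous statement $\Pi_{n+1}\circ\Pi_n=\Pi_{n+1}$ is the self-similarity (nesting) of the folding. Given this, part (4) follows exactly as part (3).

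The main obstacle I anticipate is the interval folding: verifying the nesting identity $\Pi_{n+1}\circ\Pi_n=\Pi_{n+1}$ (equivalently $\varphi_{S'}\circ\varphi_S=\varphi_{S'}$) and, underlying all four parts, that the reflection points of each $\varphi_S$ lie exactly on the wormhole set $L_n$ so that $\phi_S$ is well defined on $L$ and the composites neither create nor destroy identifications. The Cantor factor is purely combinatorial and causes no trouble; it is the interplay between the piecewise-linear reflections of $\overline{\varphi}_0$ at successive scales $d_n$ and the nested wormhole sets $L_n\setminus L_{n-1}$ that must be pinned down carefully, including the orientation reversal between adjacent cells, which affects none of the identities or isometry claims.
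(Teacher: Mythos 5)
Your proposal follows essentially the same route as the paper's proof: conjugate by $\iota$, treat the interval and Cantor factors separately, prove part (1) by exhibiting $\phi_S$ restricted to a cell as a composition of cell-to-cell isometries, and obtain parts (2)--(4) formally from the absorption identities (your $P_n \circ \Theta_{S_2} = \mathrm{id}$ and $K_{S'} \circ K_S = K_{S'}$ are exactly the identities $\overline{\varphi}_{S_1}\circ\overline{\varphi}_{S_2} = \overline{\varphi}_{S_1}$, $K_{S_1}\circ K_{S_2} = K_{S_1}$, and $\phi_{S'}\circ\phi_S = \phi_{S'}$ that the paper uses). If anything, your version is more explicit than the paper's, which merely asserts these identities and does not discuss well-definedness of $\phi_S$ on the quotient; the factorization $\phi_S = \Theta_S \circ P_n$ is a cosmetic repackaging rather than a different argument.
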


\begin{proof} 
\ 
\begin{enumerate}
	\item Let $S \in \C{S}_n$ and $\iota^{-1}(S) = [a,b] \times K_a$ where $|a|=n$. Then $K_S(K_a) = K_a$ and if $[a,b]$ is the $n-$cell used to define $\overline{\varphi}_0$,$\overline{\varphi}_0([a,b]) = id([a,b]) = [a,b]$. Then $\iota \circ (id, id) \circ \iota^{-1}(S) = S$, since $\iota \circ \iota^{-1} = id$. If $S' \neq S$ then $\phi_S(S') = \iota \circ (\overline{\varphi}_0, K_S) \circ \iota^{-1}(S') =  \iota \circ (\overline{\varphi}_0, K_S)([c,d] \times K_b)$, where $|b| = n$, which equals $\iota([a,b]\times K_a) = S$. Because $(\overline{\varphi}_0, K_S)$ is an isometry on $I \times K^{k}$ the composition is also an isometry.
	\item 
	\begin{eqnarray}
		\phi_{S_1} \circ \phi_{S_1} &=& \left(\iota \circ ( \overline{\varphi}_{S_1}, K_{S_1}) \circ \iota^{-1}\right) \circ \left(\iota \circ ( \overline{\varphi}_{S_2}, K_{S_2}) \circ \iota^{-1}\right)\\
		&=& \iota \circ (\overline{\varphi}_{S_1},K_{S_1}) \circ (\overline{\varphi}_{S_2},K_{S_2}) \circ \iota^{-1} \\
		&=& \iota \circ (\overline{\varphi}_{S_1} \circ \overline{\varphi}_{S_2}, K_{S_1} \circ K_{S_2} ) \circ \iota^{-1}\\
		&=& \iota \circ (\overline{\varphi}_{S_1}, K_{S_1}) \circ \iota^{-1}\\
		&=& \phi_{S_1}
	\end{eqnarray}
	\item Let $x,f \in L$, and $S_1 \in \C{S}_n$ such that $\phi_{S_1}(x) = \phi_{S_1}(y)$ and $S_2 \in \C{S}_n$. Then $\phi_{S_2}(\phi_{S_{1}}(x)) = \phi_{S_2}(\phi_{S_1}(y))$ implies by part b that $\phi_{S_2}(x) = \phi_{S_2}(y)$ for all $S_2 \in \C{S}_n$. 
	\item Let $S \in \C{S}_n$, $S' \in \C{S}_{n+1}$, and $x,y \in L$ such that $\phi_S(x) = \phi_S(y)$. Then we are done if $\phi_{S'} \circ \phi_S = \phi_{S'}$. However both $\overline{\varphi}_0$ and $K_S$ have this property and the conjugation by $\iota$ does not disturb it. 
\end{enumerate}
\end{proof}

For fixed $S \in \C{S}_n$ we define restriction and unfolding operators. For $f$ defined on $L$ the restriction operator acts by $R_S f = \left. f \right|_S$ and for $g$ defined on $S$ the unfolding operator acts by $U_S g = g \circ \phi_S$.

\subsection{Invariant Dirichlet Forms}\label{ssec:invariantDF}
Let $L$ be a Laakso space then it has a cell structure $\{\C{S}_n\}_{n=0}^{\infty}$ as defined above. Set $m_L^{n} = \#\C{S}_n$ to be the number of $n^{th}$ level cells in $L$. It is worth noting that $m_L^{n}$ is not genreally $(m_L)^{n}$. This is only true when $j_i$ is a constant sequence.  Let $(\C{E},\C{F})$ be a local, regular Dirichlet form on $L^{2}(L,\mu)$. Let $S \in \C{S}_n$, set
\begin{equation}
	\C{E}^{S}(g,g) = \frac{1}{m_L^{n}}\C{E}(U_Sg,U_Sg).
\end{equation}
and define the domain of $\C{E}^{S}$ to be $\C{F}^{S} = \{ g: S \mapsto \B{R},\ U_Sg \in \C{F} \}$. We write $\mu_S = \mu |_S.$

\begin{definition}\label{def:invar}
Let $(\C{E},\C{F})$ be a Dirichlet form on $L^{2}(L,\mu)$.  We say that \C{E} is an $L-$invariant Dirichlet form or that \C{E} is invariant with respect to all the local symmetries of $L$ if the following items hold:
\begin{enumerate}
	\item If $S \in \C{S}_n$, then $U_SR_Sf \in \C{F}$ (i.e. $R_Sf \in \C{F}^{S}$) for any $f \in \C{F}$.
	\item Let $n \ge 0$ and $S_1,S_2$ be any two elements of $\C{S}_n$, and let $\Phi$ be any isometry of $\B{R}^{1+k}$ such that $\tilde{\Phi} = \iota \circ \Phi \circ \iota^{-1}$ maps $S_1$ onto $S_2$. (We allow $S_1 = S_2$). If $f \in \C{F}^{S_2}$, then $f \circ \tilde{\Phi} \in \C{F}^{S_1}$ and
	\begin{equation}
		\C{E}^{S_1}(f \circ \tilde{\Phi}, f \circ \tilde{\Phi}) = \C{E}^{S_2}(f,f).
	\end{equation}
	\item For all $f \in \C{F}$
	\begin{equation}
		\C{E}(f,f) = \sum_{S \in \C{S}_n} \C{E}^{S}(R_Sf,R_Sf).
	\end{equation}
\end{enumerate}

Let  $\F{C}$ be the family of all $L-$invariant, non-zero, local, regular, conservative Dirichlet forms. 
\end{definition}

\begin{definition}\label{def:theta}
For a fixed $n \ge 0$ and any $f \in \C{F}$ define
\begin{equation}
	\Theta f = \frac{1}{m_L^{n}} \sum_{S \in \C{S}_n} U_SR_S f.
\end{equation}
\end{definition}

It is straight forward to check that $\Theta^{2} = \Theta$  and that it is a bounded operator on $C(L)$, and a bounded self-adjoint operator on $L^{2}(L,\mu)$, and $\C{F}$. 

\begin{prop}\label{prop:commute}
Let \C{E} be a local regular Dirichlet form on $L$, $T_t$ the semigroup, and $U_SR_Sf \in \C{F}$ whenever $S \in \C{S}_n$ and $f \in \C{F}$. Then the following are equivalent:
\begin{enumerate}
	\item For all $f \in \C{F}$, we have $\C{E}(f,f) = \sum_{S \in \C{S}_n} \C{E}^{S}(R_Sf,R_Sf)$;
	\item For all $f,g \in \C{F}$
	\begin{equation}
		\C{E}(\Theta f,g) = \C{E}(f,\Theta g);
	\end{equation}
	\item For all $f \in L^{2}(L,\mu)$ then $T_t\Theta f = \Theta T_t f$ a.e. where $t \ge 0$.
	\item Let $A$ be the infinitesimal generator of $T_t$, then for all $f \in Dom(A)$ we have $A\Theta f = \Theta Af$.
\end{enumerate}
\end{prop}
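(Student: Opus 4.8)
\emph{Plan of proof.} The four statements split naturally into an analytic cluster and a geometric one. Conditions (2), (3), (4) are pure statements about the commutation of the fixed bounded self-adjoint projection $\Theta$ (Definition \ref{def:theta}) with the form, the semigroup, and the generator; I would prove these mutually equivalent by the standard Dirichlet-form and functional-calculus arguments, using only that $\Theta$ is bounded, self-adjoint on $L^{2}(L,\mu)$, and maps $\C{F}$ into $\C{F}$. Condition (1) is the structural statement (it is part (3) of Definition \ref{def:invar}); I would tie it to the analytic cluster by establishing (1) $\Rightarrow$ (2) cleanly from the folding algebra and then closing the loop with (2) $\Rightarrow$ (1), where the geometry of the foldings must enter. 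Concretely I would prove (1) $\Rightarrow$ (2), the cycle (2) $\Rightarrow$ (4) $\Rightarrow$ (3) $\Rightarrow$ (2), and finally (2) $\Rightarrow$ (1).

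\emph{The folding algebra and (1) $\Rightarrow$ (2).} First I would record the algebra of $P_S := U_SR_S$. Since $P_Sf = f\circ\phi_S$ and Lemma \ref{lem:phiprop}(2) gives $\phi_{S'}\circ\phi_S = \phi_{S'}$, one has $P_SP_{S'} = P_{S'}$ for all $S,S'\in\C{S}_n$, hence $P_S\Theta = \Theta$ and $\Theta P_S = P_S$ (in particular $\Theta^{2}=\Theta$). Polarizing (1) yields $\C{E}(u,v) = \frac{1}{m_L^{n}}\sum_{S}\C{E}(P_Su,P_Sv)$ for all $u,v\in\C{F}$. Taking $u=\Theta f$ and using $P_S\Theta=\Theta$ collapses the sum to $\C{E}(\Theta f,g)=\C{E}(\Theta f,\Theta g)$; taking instead $v=\Theta g$ gives $\C{E}(f,\Theta g)=\C{E}(\Theta f,\Theta g)$. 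Comparing the two produces (2).

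\emph{The analytic cycle.} For (2) $\Rightarrow$ (4): for $f\in Dom(A)$ and any $g\in\C{F}$ one has $\C{E}(\Theta f,g)=\C{E}(f,\Theta g)=\langle -Af,\Theta g\rangle=\langle -\Theta Af,g\rangle$, using the identity $\C{E}(f,\cdot)=\langle -Af,\cdot\rangle$ on $\C{F}$ and the $L^{2}$ self-adjointness of $\Theta$; this exhibits $\Theta f\in Dom(A)$ with $A\Theta f=\Theta Af$. For (4) $\Rightarrow$ (3): a bounded operator commuting with the self-adjoint $A$ commutes with its spectral projections and hence with $T_t=e^{tA}$ (equivalently, pass first to the resolvent $(\lambda-A)^{-1}$ and then to $T_t$). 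For (3) $\Rightarrow$ (2): use the approximating forms $\C{E}^{(t)}(u,v)=t^{-1}\langle u-T_tu,v\rangle\to\C{E}(u,v)$ for $u,v\in\C{F}$, so that $\C{E}(\Theta f,g)=\lim_{t\to 0}t^{-1}\langle\Theta f-\Theta T_tf,g\rangle=\lim_{t\to 0}t^{-1}\langle f-T_tf,\Theta g\rangle=\C{E}(f,\Theta g)$, the middle step by (3) and self-adjointness of $\Theta$ and the last because $\Theta g\in\C{F}$.

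\emph{The main obstacle: (2) $\Rightarrow$ (1).} Writing $h=(I-\Theta)f$ and using $P_S|_{ran\,\Theta}=\mathrm{id}$ together with $\sum_S P_Sh=m_L^{n}\,\Theta h=0$, the cross terms drop and one gets $\frac{1}{m_L^{n}}\sum_S\C{E}(P_Sf,P_Sf)=\C{E}(\Theta f,\Theta f)+\frac{1}{m_L^{n}}\sum_S\C{E}(P_Sh,P_Sh)$. Since (2) gives the orthogonal splitting $\C{E}(f,f)=\C{E}(\Theta f,\Theta f)+\C{E}(h,h)$, statement (1) reduces to the single identity $\frac{1}{m_L^{n}}\sum_S\C{E}(P_Sh,P_Sh)=\C{E}(h,h)$ for $h\in ran(I-\Theta)\cap\C{F}$. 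This cannot follow from the projection algebra alone: a non-local form may satisfy (2) while violating it (indeed the only local form on a two-point quotient is trivial, which is exactly why locality must be used). I would prove the residual identity by invoking the (strong) locality of \C{E}: the energy measure gives $\C{E}(h,h)=\sum_{S'\in\C{S}_n}\int_{S'}d\Gamma(h,h)$, and on each cell $S'$ the function $P_Sh$ is $R_Sh$ transported by the isometry $\phi_S|_{S'}:S'\to S$ of Lemma \ref{lem:phiprop}(1). The essentially one-dimensional structure of the energy on $L$ should force (2) to upgrade to cellwise invariance of $d\Gamma$ under each folding, giving $\int_{S'}d\Gamma(P_Sh,P_Sh)=\int_{S}d\Gamma(h,h)$ and hence the identity after summing over $S'$ and $S$. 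Verifying this upgrade, namely that averaged $\Theta$-symmetry plus locality yields genuine invariance of the energy measure under the individual foldings, is the technical heart of the proposition and the step where I expect the real difficulty to lie.
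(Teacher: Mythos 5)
Your write-up correctly handles three quarters of the proposition, but it does not prove it: the implication (2) $\Rightarrow$ (1) is left open, and that is the only implication in which the locality hypothesis does any work. To be precise about what is right: the folding algebra $P_S P_{S'} = P_{S'}$ (hence $P_S\Theta = \Theta$, $\Theta P_S = P_S$) follows correctly from Lemma \ref{lem:phiprop}; the polarization argument for (1) $\Rightarrow$ (2) is correct; the cycle (2) $\Rightarrow$ (4) $\Rightarrow$ (3) $\Rightarrow$ (2) is the standard argument and is correct, using only that $\Theta$ is bounded, $L^{2}$-self-adjoint, and maps $\C{F}$ into $\C{F}$; and your reduction of (2) $\Rightarrow$ (1) to the identity $m_L^{-n}\sum_S \C{E}(P_S h, P_S h) = \C{E}(h,h)$ for $h$ with $\Theta h = 0$ is correct. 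Your insistence that this residual identity cannot be purely algebraic is also right: with $\mu$ normalized, the nonlocal regular Dirichlet form $\C{E}(f,f) = \int_L\int_L (f(x)-f(y))^{2}\,\mu(dx)\,\mu(dy) = 2\|f\|_{L^{2}}^{2} - 2\bigl(\int_L f\,d\mu\bigr)^{2}$ satisfies (2) (because $\Theta$ is $L^{2}$-self-adjoint and $\Theta 1 = 1$) but violates (1) (test $f = 1_{S_0}$ for a single cell $S_0$: every $P_S f$ is a.e.\ constant, so the right side of (1) vanishes while $\C{E}(f,f) = 2\mu(S_0)(1-\mu(S_0)) > 0$).

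The gap is that the residual identity is never established; you yourself call the needed step the technical heart and say only that locality ``should force'' it. Two concrete problems with the sketch: (i) the decomposition $\C{E}(h,h) = \sum_{S'}\int_{S'} d\Gamma(h,h)$ presumes strong locality (no killing part, whereas the proposition assumes only locality) and presumes that $\Gamma$ puts no mass on the cell boundaries (the wormhole slices $\iota(\{x_i\}\times K^{k})$, which are uncountable and not obviously $\Gamma$-null); (ii) cellwise invariance $\int_{S'} d\Gamma(P_S h, P_S h) = \int_S d\Gamma(h,h)$ under each individual folding is genuinely stronger than the averaged statement (2), and no mechanism for the upgrade is offered --- already for weighted forms $\int |f'|^{2} w\,dx$ on an interval this upgrade is exactly the assertion that (2) forces $w$ to be reflection-symmetric, which requires an argument quantifying over all test functions, not a projection identity. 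For comparison, the paper does not reprove this proposition at all: its proof is a citation of Proposition 2.21 of \cite{BBKT2010}, and the direction you left open is precisely where that proof must use the diffusion structure (one workable route, consistent with the machinery of Theorem \ref{thm:foldedprocess}, is to use (3) to write the difference of the two sides of (1) as $\lim_{t\downarrow 0} t^{-1}\sum_S \int_S T_t(f - P_S f)\cdot f\,d\mu$ and then show this vanishes because $f - P_S f$ vanishes on $S$ and the process has continuous paths; the boundary contributions are where the real estimates live). As it stands, your proposal proves (1) $\Rightarrow$ (2) $\Leftrightarrow$ (3) $\Leftrightarrow$ (4), not the stated equivalence.
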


\begin{proof} This is Proposition 2.21 in \cite{BBKT2010}, the proof is the same as well in light of Definition \ref{def:invar} above corresponding to Definition 2.15 in \cite{BBKT2010}. The connection between Dirichlet forms, semi-groups of operators, and infinitesimal generators is described in \cite{FOT1994}.
\end{proof}

\begin{prop}\label{prop:closedcombo}
The family \F{C} is closed under the linear combinations described in Theorem \ref{thm:combo}.
\end{prop}

\begin{proof}
Theorem \ref{thm:combo} gives that for any $\C{A},\C{B} \in \F{C}$ with common domain and $\C{A} \le \C{B}$ that $\C{E} = (1+\delta)\C{B}- \C{A}$ is a local, conservative, irreducible, regular Dirichlet form for all $\delta > 0$. To see that \C{E} is also invariant all that is necessary is to write out the conditions in Definition \ref{def:invar} as applying to $(1+\delta)\C{B} - \C{A}$ and check that the conditions hold. Also $\C{E}$ is non-zero as long as $\delta > 0$. 
\end{proof}

This type of combination is only valid for Dirichlet forms with the same domain and the possibility of having different domains appear for elements of \F{C} has not yet been excluded.

\section{Processes on Laakso Spaces}\label{sec:processes}
The main goal of this paper is to establish heat kernel estimates for operators in \F{C}.  Following the arguments of \cite{BBKT2010} we use a probabalistic approach to arrive at an elliptic Harnack inequality. 

Let $(X_t; t\ge 0)$ be the diffusion associated with a Dirchlet form that is $L$-invariant (Definition \ref{def:invar}) on $L$ with laws $\B{P}^{z}$ for $z \in L$. 

\subsection{The Reflected Process}\label{ssec:reflected}
We condense many of the results from \cite{BBKT2010} which describe the properties of the reflected processes, any skipped details can be found there.

\begin{definition}\label{def:reflectedproc}
Let $X_t$ be a diffiusion on $L$ that is associated to a $L-$invariant Dirichlet form. Then for any choice $S \in \C{S}_n$ of cells along with its folding function, $\phi_S$ we define the reflected process in S by 
\begin{equation}
	Z_t^{S} = Z_t = \phi_S(X_t).
\end{equation}
Where the superscript $S$ in $Z^{S}_t$ is omitted when no confusion can occur. 
\end{definition}

\begin{theorem}\label{thm:foldedprocess}
Let $S \in \C{S}_n$. Then $Z$ is a $\mu_S-$symmetric Markov process with Dirichlet form $(\C{E}^{S},\C{F}^{S})$, and semi-group $T_t^{Z}f = R_sT_tU_Sf$. Write $\tilde{\B{P}}^{y}$ for the laws of $Z$; these are defined for $y \in S \setminus \C{N}_2^{Z}$, where $\C{N}_2^{Z}$ is a properly exceptional set for $Z$. There exists a properly exceptional set $\C{N}_2$ for $X$ such that for any Borel set $A \in L$,
\begin{equation}
	\tilde{\B{P}}^{\phi_S(x)}(Z-t \in A) = \B{P}^{x}(X_t \in \phi_S^{-1}(A))
\end{equation}
for $x \in L \setminus \C{N}_2$.
\end{theorem}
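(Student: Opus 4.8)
The plan is to construct $Z$ from its prospective Dirichlet form $(\C{E}^{S},\C{F}^{S})$ via the general theory of \cite{FOT1994}, and then to identify $\phi_S(X)$ with it by matching transition semigroups. The algebraic engine is that the fibers of the folding map do not depend on the chosen cell: by the third assertion of Lemma \ref{lem:phiprop}, $\phi_S(x)=\phi_S(y)$ for one $S\in\C{S}_n$ iff it holds for every $S\in\C{S}_n$. Hence a function $h$ on $L$ is constant on these common fibers iff $U_SR_Sh=h$ for one (equivalently all) $S$, iff $\Theta h=h$, because $\Theta h(x)$ is precisely the average of $h$ over the fiber through $x$. I will call such $h$ \emph{symmetric}.

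First I would record the projection identities $R_SU_S=\mathrm{id}$ (from the first assertion of Lemma \ref{lem:phiprop}, since $\phi_S|_S=\mathrm{id}$) and $U_SR_SU_S=U_S$, so that every function of the form $U_Sf$ is symmetric. As the Dirichlet form is $L$-invariant, Proposition \ref{prop:commute} gives $T_t\Theta=\Theta T_t$, so $T_t$ maps symmetric functions to symmetric functions; applied to $U_Sf$ this yields the single master identity $U_SR_ST_tU_Sf=T_tU_Sf$. From it the claims about $T_t^{Z}:=R_ST_tU_S$ follow formally: the semigroup law holds since $R_ST_sU_S(R_ST_tU_S)=R_ST_s(T_tU_S)=R_ST_{s+t}U_S$; symmetry on $L^{2}(S,\mu_S)$ holds because $\langle U_Sf,U_Sg\rangle_\mu=m_L^{n}\langle f,g\rangle_{\mu_S}$ (each cell is carried onto $S$ by a measure-preserving isometry, Lemma \ref{lem:phiprop}) together with the symmetry of $T_t$; and the Markov and strong-continuity properties are inherited because $U_S$ and $R_S$ are bounded, positivity-preserving, and constant-preserving. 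Computing the associated form as $\lim_{t\to0}\frac{1}{t}\langle g-T_t^{Z}g,g\rangle_{\mu_S}$ and inserting the definitions returns $\frac{1}{m_L^{n}}\C{E}(U_Sg,U_Sg)=\C{E}^{S}(g,g)$ with domain $\C{F}^{S}$, so $(\C{E}^{S},\C{F}^{S})$ is exactly the Dirichlet form of the symmetric Markovian semigroup $T_t^{Z}$, its regularity and conservativeness being inherited from $\C{E}$.

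With $(\C{E}^{S},\C{F}^{S})$ identified as a regular Dirichlet form on $L^{2}(S,\mu_S)$, \cite{FOT1994} furnishes an associated $\mu_S$-symmetric Hunt process $Z$ with laws $\tilde{\B{P}}^{y}$ defined off a properly exceptional set $\C{N}_2^{Z}$. To see that $\phi_S(X)$ realizes it, I would compute the transition functional: for bounded Borel $g$ on $S$,
\begin{equation}
\B{E}^{x}[g(\phi_S(X_t))]=\B{E}^{x}[(U_Sg)(X_t)]=T_tU_Sg(x)=T_tU_Sg(\phi_S(x))=T_t^{Z}g(\phi_S(x)),
\end{equation}
where the third equality uses that $T_tU_Sg$ is symmetric, hence constant on the fiber containing both $x$ and $\phi_S(x)$, and the last uses $\phi_S(x)\in S$. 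Thus the one-dimensional transitions of $\phi_S(X)$ started at $x$ agree with those of $Z$ started at $\phi_S(x)$, and the Markov property of $X$ upgrades this to equality of all finite-dimensional distributions, giving $\tilde{\B{P}}^{\phi_S(x)}(Z_t\in A)=\B{P}^{x}(X_t\in\phi_S^{-1}(A))$.

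The main obstacle is not the algebra above but the measure-theoretic bookkeeping in this last step: the semigroup identities hold only $\mu$- and $\mu_S$-a.e., so to promote them to the stated pointwise identity I must produce a single properly exceptional set $\C{N}_2$ for $X$ off which $\phi_S(X)$ is genuinely a version of the Hunt process $Z$ and off whose image the quasi-continuous representatives agree. Following \cite{BBKT2010}, I would take $\C{N}_2$ to contain $\phi_S^{-1}(\C{N}_2^{Z})$ together with the $X$-exceptional set where $U_SR_ST_tU_S=T_tU_S$ fails, and verify, using the piecewise-isometric and hence nondegenerate nature of $\phi_S$, that $\C{N}_2$ is properly exceptional and that $\phi_S$ carries $X$-quasi-continuous functions to $Z$-quasi-continuous ones. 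This transfer of exceptional sets across the folding is the only genuinely delicate point.
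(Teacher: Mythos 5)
Your proposal is correct, and its algebraic core coincides with the paper's: both rest on the same master identity $U_SR_ST_tU_Sf = T_tU_Sf$ obtained from $L$-invariance via Proposition \ref{prop:commute} (the $\Theta$-commutation), the same unfolding inner-product identity $\langle U_Sf,U_Sg\rangle_\mu = m_L^{n}\langle f,g\rangle_{\mu_S}$ to get symmetry, and the same semigroup-limit computation to identify the form as $\C{E}^{S}$. Where you genuinely diverge is in how the Markov property of $Z$ is produced. The paper takes $Z:=\phi_S(X)$ as already defined (Definition \ref{def:reflectedproc}), uses the countable-base argument to get a single properly exceptional set $\C{N}_2$ off which $T_t1_{\phi_S^{-1}(A)}$ is constant on fibers, and then invokes Theorem 10.13 of \cite{Dynkin1965} (image of a Markov process under a map whose fibers the transition function respects) to conclude in one stroke that $Z$ is Markov with semigroup $T_t^{Z}=R_ST_tU_S$. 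You instead construct $Z$ abstractly as the Hunt process of $(\C{E}^{S},\C{F}^{S})$ via \cite{FOT1994} and then identify $\phi_S(X)$ with it by matching finite-dimensional distributions, in effect reproving Dynkin's theorem by hand through the induction $U_S[g_1\,T^{Z}_{s}(h)] = U_Sg_1\cdot T_sU_Sh$. Each route has a cost the other avoids: yours must additionally verify that $(\C{E}^{S},\C{F}^{S})$ is \emph{regular} on $L^{2}(S,\mu_S)$ before FOT's existence theorem applies (you assert this is ``inherited'' — it is true, via boundedness of $\Theta$ on $C(L)$ and on $\C{F}$, but it is an extra obligation the paper's route never incurs), and your exceptional-set bookkeeping is only gestured at, whereas the paper makes it concrete with the countable base $(A_m)$ and the union $\C{N}_2=\bigcup_m\C{N}_{2,m}$; to finish your argument you would need that same device, applied both to the sets $A_m$ and to countably many times, to get one properly exceptional set handling the whole f.d.d. induction. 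Conversely, your route is self-contained within Dirichlet-form theory and does not depend on the classical reference \cite{Dynkin1965}.
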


\begin{proof}
To ease notation we drop the subscript $S$ from $\phi_S$. Our first claim is the existence of the properly exceptional $\C{N}_2$ set for $X$ such that
\begin{equation}
	\B{P}^{x}(X_t \in \phi^{-1}(A)) = T_t1_{\phi^{-1}(A)}(x) = T_t 1_{\phi^{-1}(A)}(y) = \B{P}^{x}(X_t \in \phi^{-1}(A))
\end{equation}
for all Borel $A$, $x,y \in L \setminus \C{N}_2$ such that $\phi(x) = \phi(y)$. It is only necessary to prove this relation on a countable base $(A_m)$ of the Borel $\sigma-$field. Note that $1_{\phi^{-1}(A_m)} = U_S1_{A_m}$ so the claim reduces to showing that 
\begin{equation}
	T_tU_S1_{A_m}(x) = T_tU_S1_{A_m}(y)\label{eq:reflect}
\end{equation}
for $x,y \in L \setminus \C{N}_2$ and $\phi(x) = \phi(y)$. But $U_S 1_{A_m}$ is invariant under $\Theta$ and by Proposition \ref{prop:commute} we have 
\begin{equation}
	\Theta T_t U_S f = T_t \Theta U_S f = T_t U_S f\ q.e.
\end{equation}
Thus there is a properly exceptional set $\C{N}_{2,m}$ such that (\ref{eq:reflect}) holds off of $\C{N}_{2,m}$. Take $\C{N}_2 = \bigcup_m \C{N}_{2,m}$ and $\C{N}_2^{Z} = \phi(\C{N}_2)$. Theorem 10.13 of \cite{Dynkin1965} shows that $Z$ is a Markov process and that the semi-groups are related by $T^{Z}_tf = R_ST_t(U_Sf)$. Then the last statement of the theorem to show is the symmetry of the process.

The proof of symmetry relies on two facts and a calculation. First is (\ref{eq:reflect}). The second is that $U_SR_ST_tU_Sf = T_tU_Sf$. The calculation is to write out both $\langle T_t^{Z} f, g \rangle_S$ and $\langle  f, T_t^{Z}g \rangle_S$, using the definition of $T_t^{Z}$, as $m_L^{-n}\langle T_tT_Sf,U_Sg \rangle$ and use the symmetry of $T_t$ to show symmetry of $T_t^{Z}$. Identifying the Dirichlet form follows by using $T_t^{Z}$ to approximate $\C{E}^{Z}$ and comparing to the definition of $\C{E}^{S}$.
\end{proof}

\begin{lemma}
Let $S,S' \in \C{S}_n$, and $\Phi$ be an isometry from $S$ to $S'$. Then for some properly exceptional set $\C{N}$ if $x \in S \setminus \C{N}$,
\begin{equation}
	\B{P}^{x}(\Phi(Z) \in \cdot ) = \B{P}^{\Phi(x)}(Z \in \cdot).
\end{equation}
\end{lemma}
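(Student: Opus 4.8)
The plan is to deduce the identity from the invariance of the Dirichlet form (Definition \ref{def:invar}) together with the principle that a symmetric Markov process is determined, up to a properly exceptional set, by its Dirichlet form; the bookkeeping for the exceptional set should parallel the construction of $\C{N}_2$ in the proof of Theorem \ref{thm:foldedprocess}. I would identify the lemma's isometry $\Phi\colon S \to S'$ with the induced map $\tilde{\Phi}$ of Definition \ref{def:invar}(2), taking $S_1 = S$ and $S_2 = S'$. First I would record that $\Phi$ is measure-preserving: it is an isometry between two depth-$n$ cells, and $\mu_S$ agrees, up to a normalization constant common to all depth-$n$ cells, with the Hausdorff measure that any isometry preserves (Proposition \ref{prop:laaksoproperties}). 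Consequently the pullback $\Phi^{\ast} g = g \circ \Phi$ is a unitary map $L^{2}(S',\mu_{S'}) \to L^{2}(S,\mu_S)$.

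Next I would invoke Definition \ref{def:invar}(2) directly: it states that $\Phi^{\ast}$ carries $\C{F}^{S'}$ into $\C{F}^{S}$ with $\C{E}^{S}(\Phi^{\ast} g,\Phi^{\ast} g) = \C{E}^{S'}(g,g)$, and, the roles of $S,S'$ being symmetric, this is a unitary isomorphism of the Dirichlet spaces $(\C{E}^{S'},\C{F}^{S'})$ and $(\C{E}^{S},\C{F}^{S})$. Since the semigroup of a symmetric form is determined by the form, the generator $A$ being recovered via $\C{E}(u,v) = \langle \sqrt{-A}u,\sqrt{-A}v \rangle$ as in Subsection \ref{ssec:generalities}, this isomorphism intertwines the associated semigroups, which by Theorem \ref{thm:foldedprocess} are the transition semigroups $T^{Z^{S}}_t$ and $T^{Z^{S'}}_t$ of the two reflected processes. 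That is, $T^{Z^{S}}_t(g \circ \Phi) = (T^{Z^{S'}}_t g)\circ \Phi$ as an $L^{2}(S,\mu_S)$-identity for every $g$ and every $t \ge 0$. Evaluating at indicators gives, for $\mu_S$-a.e.\ $x$ and Borel $A \subset S'$,
\begin{equation}
	\B{P}^{x}(\Phi(Z_t) \in A) = T^{Z^{S}}_t(1_A \circ \Phi)(x) = (T^{Z^{S'}}_t 1_A)(\Phi(x)) = \B{P}^{\Phi(x)}(Z_t \in A).
\end{equation}

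Finally I would upgrade this almost-everywhere, fixed-time statement to a quasi-everywhere identity of the full laws. Exactly as in the construction of $\C{N}_2$ in Theorem \ref{thm:foldedprocess}, I would fix a countable base $(A_m)$ of the Borel $\sigma$-field of $S'$; for each $m$ the displayed identity with $A = A_m$ holds off a properly exceptional set, and taking the union over $m$ and over rational $t$ produces a single properly exceptional $\C{N}$ off which $\B{P}^{x}(\Phi(Z_t)\in A_m) = \B{P}^{\Phi(x)}(Z_t \in A_m)$ for all $m$ and all rational $t$. The one-dimensional distributions then agree for all $x \in S \setminus \C{N}$, and promoting this to equality of the processes follows from the Markov property and right-continuity of paths, equivalently from the uniqueness of the Hunt process attached to a regular Dirichlet form. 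I expect this last promotion---ensuring the exceptional set is genuinely uniform in $t$ and in the test sets, and that the finite-dimensional distributions (not merely the marginals) coincide---to be the main technical obstacle, though it is precisely the type of argument already carried out for $\C{N}_2$.
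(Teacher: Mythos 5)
Your proposal is correct and takes essentially the same route as the paper: the paper's proof is a one-sentence citation of exactly the three ingredients you develop in detail, namely Theorem \ref{thm:foldedprocess}, the $L$-invariance property of Definition \ref{def:invar}(2), and the fact that symmetric Markov processes sharing the same regular Dirichlet form coincide up to a set of capacity zero. Your unitary-intertwining computation and the exceptional-set bookkeeping simply fill in what the paper leaves implicit in that citation.
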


\begin{proof}
This follows from Theorem \ref{thm:foldedprocess}, the definition of $L-$invariance, and the equivalence of processes which have the same Dirichlet forms upto sets of capacity zero.
\end{proof}

\begin{definition}
For Borel $D \subset L$ let 
\begin{equation}
	E_D = \{ x \in L :\ \B{P}^{x}(\tau_D = \infty) = 0 \}.
\end{equation}
Where $\tau_D$ is the exit time of $D$. 
\end{definition}

\begin{definition}
It will be useful to have a notion of ``half-face'' available. Let $S$ be a cell in $L$. Then it is of the form $\iota(K_a \times [x_1,x_2])$ where $a$ is a word of length $n$ and $x_1,x_2$ are adjacent in $L_n \cup \{0,1\}$. Then each subset of $L$ of the following form is called a half-face:
\begin{equation}
	 A_{i,k} = \{q \in L :\ \iota(x_i,w) = q,\ w \in K_{ak},\ k \in \{0,1\}^{k}\}.
\end{equation}
\end{definition}

\begin{lemma}\label{lem:hitprob}
Let $A_0,A_1$ be two half-faces of a cell $S \in \C{S}_n$ and $S_*$ the union of the elements of $\C{S}_n$ that contain $A_0$. Set $ \tau = \tau_{S_*}$. There exists a constant  $q_1>0$ such depending only on $L$ such that if $x \in A_0 \cap E_{S_*}$ and $T_0 \le \tau$ is a finite $(\C{F}^{Z}_t)$ stopping time, then
\begin{equation}
	\B{P}^{x}(X_{T_0} \in S | \C{F}_{T_0}^{Z}) \ge q_1.
\end{equation}
Moreover exists $q_0 >0$ such that
\begin{equation}
	\B{P}^{x}(T_{A_1}^{X} \le \tau) \ge q_0q_1.
\end{equation}
\end{lemma}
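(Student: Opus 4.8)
The plan is to prove the two inequalities in sequence, deriving the second from the first together with a folding/unfolding argument. For the first inequality, I would work with the reflected process $Z$ on the union cell $S_*$ and exploit the symmetry established in Theorem \ref{thm:foldedprocess}. The key observation is that $S_*$ is a union of finitely many $n$-cells meeting along the half-face $A_0$, and the local symmetries of $L$ permute these cells transitively via the folding maps $\phi_S$ and the associated isometries $\Phi$. Since $X_{T_0} \in A_0 \subset S_*$ and the reflected process identifies all cells of $S_*$ with a single model cell, conditioning on $\C{F}_{T_0}^{Z}$ fixes the folded position $\phi_{S}(X_{T_0})$ but leaves the actual cell containing $X_{T_0}$ distributed among the cells of $S_*$. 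By the preceding Lemma on isometry invariance, $\B{P}^{x}(X_{T_0} \in S \mid \C{F}_{T_0}^{Z})$ equals the conditional probability of landing in any one of the symmetric cells, and since these probabilities sum to at most $1$ over the finitely many (say $M$) cells comprising $S_*$ and are all equal by symmetry, each is at least $q_1 = 1/M$. The constant depends only on $L$ because $M$ is determined by the local combinatorial structure near a half-face, which is $2^{k}$ (governed by the transposition operator $T_n$ acting on the relevant digit) and hence uniform across cells and levels.

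For the second inequality, the plan is to show that once the process is in the target cell $S$ with probability at least $q_1$ at the stopping time $T_0$, it subsequently hits the opposite half-face $A_1$ before exiting $S_*$ with probability bounded below by some $q_0 > 0$. Here I would use the conservativity and irreducibility of the $L$-invariant Dirichlet form: on the single cell $S$, the reflected diffusion $Z$ is an irreducible, conservative, $\mu_S$-symmetric Markov process whose state space $S$ is connected and has positive measure assigned to a neighborhood of $A_1$. Irreducibility together with the geodesic structure of $S$ (Proposition \ref{prop:laaksoproperties}) guarantees that starting from $A_0$ the process reaches $A_1$ with positive probability before $\tau$; compactness of $L$ and the uniformity of the cell geometry let this positive probability be taken as a single constant $q_0$ independent of the cell or the level. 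The Markov property at time $T_0$, applied via the strong Markov property and the tower property of conditional expectation, then yields
\begin{equation}
	\B{P}^{x}(T_{A_1}^{X} \le \tau) \ge \B{P}^{x}(X_{T_0} \in S)\, q_0 \ge q_0 q_1.
\end{equation}

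The main obstacle I anticipate is making the symmetry argument for the first inequality fully rigorous at the level of conditional probabilities rather than unconditional ones. The subtlety is that $\C{F}_{T_0}^{Z}$ is the filtration of the reflected process, which does not distinguish between the symmetric cells of $S_*$, so one must argue that conditionally on this coarser information the actual position $X_{T_0}$ is uniformly distributed over the preimages under $\phi_S$. This requires invoking the relation between $X$ and $Z$ in Theorem \ref{thm:foldedprocess} and the isometry-invariance Lemma carefully, keeping track of the properly exceptional sets so that the identity holds quasi-everywhere. A secondary difficulty is confirming that the lower bound $q_0$ for hitting $A_1$ from $A_0$ is genuinely uniform; this rests on the fact that all cells at all levels are isometric copies of a single model cell (by part (1) of Lemma \ref{lem:phiprop}) rescaled by the geodesic metric, so that the hitting probability is scale-invariant and depends only on $L$ through $k$ and the structure of $K$.
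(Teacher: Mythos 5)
Your treatment of the first inequality is essentially the paper's own argument: since $X$ is a diffusion and $T_0 \le \tau$, we have $X_{T_0} \in S_*$, and conditional on $\C{F}_{T_0}^{Z}$ the folding symmetries of Theorem \ref{thm:foldedprocess} distribute $X_{T_0}$ equally among the finitely many $n$-cells making up $S_*$, so $q_1$ is the reciprocal of that cell count ($2^{-k}$ in the paper), a purely combinatorial constant depending only on $L$. That part is sound.

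The second inequality is where the proposal has a genuine gap, in two places. First, you derive positivity of $\B{P}^{x}(T_{A_1}^{X} \le \tau)$ from irreducibility, conservativity, and connectedness; but irreducibility as defined in Section \ref{ssec:generalities} only says that no set of \emph{positive measure} other than the whole space is $T_t$-invariant, and a half-face $A_1$ is a Cantor slice of the form $\iota(\{x_j\} \times K_{ak'})$, which has $\mu$-measure zero. Irreducibility gives no information at all about hitting measure-zero sets (for general Markov processes such sets may well be polar), so this step fails as stated. What actually makes half-faces hittable is path continuity combined with the cut structure of Laakso spaces: any continuous path exiting $S_*$ must cross one of the adjacent Cantor slices $\iota(\{z_{\pm}\} \times K^{k})$ with $z_{\pm} \in L_{n+1}$, and this is exactly how the paper proceeds, then splitting that almost-sure crossing among the $2^{k+1}$ neighboring level-$(n+1)$ half-faces, which by the same symmetry used for $q_1$ are equally likely to be hit first; hence $q_0 \ge 2^{-(k+1)}$. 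Second, your uniformity claim for $q_0$ rests on cells at \emph{all levels} being rescaled isometric copies of one model cell, but Lemma \ref{lem:phiprop}(1) only provides isometries between cells of the \emph{same} level, and Definition \ref{def:invar} only requires invariance under these same-scale foldings --- nothing forces an arbitrary $\C{E} \in \F{C}$ to be self-similar under the maps that rescale between levels, so the hitting probability of your irreducibility argument could a priori degenerate as $n \to \infty$; compactness of $L$ cannot repair this, since the quantity in question depends on the unknown process, not on the geometry alone. The paper's symmetry-counting argument sidesteps both issues, which is precisely why its constants $q_1$ and $q_0$ depend only on $k$, hence only on $L$.
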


\begin{proof}
Because $X$ is a diffusion and $T_0 \le \tau$ we have that $X_{T_0} \in S_*$. From the geometry of $L$, $S_*$ contains only $2^{k}$ cells so $q_1 = 2^{-k}$ which is only dependent on $L$. 

Let $z_{\pm} \in L_{n+1}$ be then adjacent elements to the $x_i$ in the definition of the half-face $A_0$. Then almost any path of $X_t$ starting in $E_{S_*}$ will hit either $\iota(z_-,K^{k})$ or $\iota(z_+,K^{k})$ where $K$ is the product of Cantor sets in the definition of $L$. It has already been shown that with proability $q_1$ this hit occurs in $S \subset S_*$. In this case there are $2^{k+1}$ neighboring half-faces of level $n+1$, one of which comprises $A_1$. By the symmetry of the process $X$ the probability that the next one of these half-faces hit is equal. Hence  $q_0 \ge \frac{1}{2^{k+1}}$.
\end{proof}

\subsection{Coupling}\label{ssec:coupling}

Coupling two locally invariant processes will provide a key step in proving the elliptic Harnack inequality. We state the following result used to produce coupled processes in a manner that respects the local symmetry of the Laakso spaces.

\begin{lemma}[\cite{BBKT2010}]\label{lem:coupling}
Let $(\Omega,\C{F},\B{P})$ be a probability space. Let $X$ and $Z$ be random variables taking values in separable metric spaces $E_1$ and $E_2$, respectively, each furnished with the Borel $\sigma-$field. Then there exists $F:E_2 \times [0,1] \rightarrow E_1$ that is jointly measurable such that if $U$ is a random variable whose distribution is uniform on $[0,1]$ which is independent of $Z$ and $\tilde{X} = F(Z,U)$, then $(X,Z)$ and $(\tilde{X},Z)$ have the same law.
\end{lemma}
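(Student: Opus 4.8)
The statement to prove is Lemma~\ref{lem:coupling}, a measurable coupling result: given random variables $X$ and $Z$ on separable metric spaces $E_1, E_2$, one can realize the conditional law of $X$ given $Z$ as a deterministic measurable function $F(Z,U)$ of $Z$ and an independent uniform random variable $U$. The plan is to build $F$ from a regular conditional distribution of $X$ given $Z$ together with the standard fact that any Borel probability measure on a separable metric space can be generated from a single uniform $[0,1]$ variable via an inverse-CDF type construction.

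\begin{proof}
First I would extract a regular conditional distribution. Since $E_1$ is a separable metric space (and, as is standard in this coupling literature, we may take $E_1$ to be Polish or embed it into a Polish space, which is exactly the setting in which regular conditional probabilities exist), there is a probability kernel $\kappa: E_2 \times \C{B}(E_1) \to [0,1]$ such that $z \mapsto \kappa(z,\cdot)$ is measurable and $\kappa(Z,\cdot)$ is a version of the conditional law $\B{P}(X \in \cdot \mid Z)$. Thus $\kappa(z,\cdot)$ is the law of $X$ on the event $\{Z = z\}$.

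The core of the argument is a measurable parametrization of these laws by $[0,1]$. For a fixed $z$, I would construct a measurable map $G(z,\cdot):[0,1] \to E_1$ such that if $U$ is uniform on $[0,1]$ then $G(z,U)$ has law $\kappa(z,\cdot)$. When $E_1 = \B{R}$ this is just the quantile transform $G(z,u) = \inf\{t : \kappa(z,(-\infty,t]) \ge u\}$, which is jointly measurable in $(z,u)$ because the CDF is jointly measurable. For a general separable metric $E_1$, embed $E_1$ Borel-isomorphically into a standard Borel space on $\B{R}$ (separable metric spaces are Borel-isomorphic to Borel subsets of $\B{R}$) and pull the quantile construction back through this isomorphism; joint measurability in $(z,u)$ is preserved since the isomorphism and its inverse are Borel. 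Setting $F = G$ then gives a jointly measurable $F:E_2 \times [0,1] \to E_1$.

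It remains to verify that $(\tilde X, Z) = (F(Z,U), Z)$ has the same law as $(X,Z)$. For a bounded measurable $h$ on $E_1 \times E_2$, condition on $Z$: using independence of $U$ from $Z$ and Fubini,
\begin{equation}
	\B{E}[h(\tilde X, Z) \mid Z = z] = \int_0^1 h(F(z,u),z)\, du = \int_{E_1} h(x,z)\, \kappa(z,dx),
\end{equation}
where the last equality is exactly the defining property of $G$. Taking expectation over $Z$ recovers $\B{E}[h(X,Z)]$ by the definition of the regular conditional distribution $\kappa$. Since $h$ was arbitrary the two joint laws agree.

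The step I expect to be the main obstacle is the joint measurability of the parametrizing map $G(z,u)$ in the general (non-real-valued) target: one must carefully transport the quantile construction through a Borel isomorphism and confirm measurability is not lost, rather than treat this as automatic. The existence of the regular conditional distribution $\kappa$ in the first step is the other place where the separability (effectively, standard Borel) hypothesis on $E_1$ is genuinely used. Everything after that is the routine Fubini verification above.
\end{proof}
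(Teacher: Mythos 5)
The paper offers no proof of this lemma at all: it is stated with the attribution \cite{BBKT2010} and used as a black box, the result being the classical ``transfer theorem'' of probability theory (it appears in standard references such as Kallenberg's \emph{Foundations of Modern Probability} and Dudley's \emph{Real Analysis and Probability}). So there is no internal argument to compare against; your proposal must stand on its own, and in substance it does --- regular conditional distribution plus randomization by a quantile transform is exactly how the literature proves this result, and your joint-measurability check (via $\{(z,u): G(z,u)\le a\}=\{(z,u): u \le \kappa(z,(-\infty,a])\}$) and the concluding Fubini computation are correct.

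One technical claim in your write-up is false as stated, and it sits at the only point where the hypothesis on $E_1$ does real work. You assert that ``separable metric spaces are Borel-isomorphic to Borel subsets of $\B{R}$.'' That is the Borel isomorphism theorem, and it holds for Polish spaces (more generally, standard Borel spaces), not for arbitrary separable metric spaces: a non-Borel subset of $[0,1]$ with its subspace topology is separable metric but is not Borel-isomorphic to any Borel subset of $\B{R}$, and for such a target space even the regular conditional distribution $\kappa$ with which you begin can fail to exist. Your opening hedge (``embed it into a Polish space'') does not repair this, since the image of $E_1$ in its completion need not be Borel, and the conditional laws produced there need not be supported on $E_1$. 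Consequently, what you have proved is the lemma for $E_1$ Polish (or standard Borel). This defect is arguably inherited from the statement itself --- the standard references prove the transfer theorem in the Borel-space setting --- and it is harmless for this paper, where the lemma is applied to diffusions on a compact metric space, so the relevant spaces are Polish. But if you want your proof to match the stated generality, you should either add the standard Borel hypothesis explicitly, or carry out the genuinely nontrivial extra step of passing to the completion $\bar{E}_1$ and showing, using that the law of $X$ has outer measure one on $E_1$, that the resulting $F$ can be modified to take values in $E_1$; that step is not automatic and should not be waved through.
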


\begin{lemma}\label{lem:coupled}
Let $x_1,x_2 \in L$ where $x_i \in S_i \in \C{S}_n$, and let $\Phi = \phi_{S_1} |_{S_2}$. Then there exists a probability space $(\Omega,\C{F},\B{P})$ carrying processes $X_i, i=1,2$ and $Z$ with the following properties.
\begin{enumerate}
	\item Each $X_i$ is an $\C{E}-$diffusion started at $x_i$.
	\item $Z = \phi_{S_2}(X_2) = \Phi \circ \phi_{S_1}(X_1)$.
	\item $X_1$ and $X_2$ are conditionally independent given $Z$.
\end{enumerate}
\end{lemma}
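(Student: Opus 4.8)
The plan is to build the coupling on top of a single driving diffusion, namely $X_2$, and to reconstruct $X_1$ as a measurable function of the common reflected process together with an auxiliary uniform random variable. First I would realize $X_2$ as an $\C{E}$-diffusion started at $x_2$ on some probability space and set $Z = \phi_{S_2}(X_2)$; by Theorem \ref{thm:foldedprocess} this $Z$ is exactly the reflected (Markov) process associated with $(\C{E}^{S_2},\C{F}^{S_2})$, living in $S_2$ and started at $\phi_{S_2}(x_2)$. On a separate auxiliary space I would run an $\C{E}$-diffusion $Y$ started at $x_1$ and form its reflection $Z_Y = \Phi \circ \phi_{S_1}(Y)$ into the same reference cell. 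The heart of the matter is to verify that $Z$ and $Z_Y$ agree in law as path-valued random variables: both are reflected diffusions in the common cell, they carry the same Dirichlet form by the $L$-invariance of $\C{E}$ together with the isometry lemma following Theorem \ref{thm:foldedprocess}, and their initial points agree under $\Phi$. Once this identity in law is secured, the two folded processes become interchangeable.

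With $Z \stackrel{d}{=} Z_Y$ established, I would apply the coupling lemma (Lemma \ref{lem:coupling}) to the pair $(Y, Z_Y)$, taking $E_1$ to be the path space of $L$-valued diffusions and $E_2$ the path space of the reflected process, both of which are separable metric. This produces a jointly measurable $F$ such that, feeding in the genuine reflected process $Z$ and a uniform $U$ independent of $X_2$, the random path $X_1 := F(Z,U)$ satisfies $(X_1, Z) \stackrel{d}{=} (Y, Z_Y)$. I would then read off the three conclusions from this equality in law: the first coordinate shows $X_1$ is distributed as an $\C{E}$-diffusion from $x_1$, giving property (1); since the reference pair is supported on the graph $\{(w, \Phi\circ\phi_{S_1}(w))\}$, so is $(X_1, Z)$, whence $Z = \Phi\circ\phi_{S_1}(X_1) = \phi_{S_2}(X_2)$ almost surely, giving property (2); and because $X_1 = F(Z,U)$ with $Z$ a functional of $X_2$ and $U$ independent of $X_2$, conditioning on $Z$ leaves $X_1$ a function of $U$ alone, which is conditionally independent of $X_2$, giving property (3).

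The step I expect to be the main obstacle is the identity in law $Z \stackrel{d}{=} Z_Y$, and in particular the bookkeeping that makes the two reflections land in a common state space with matching initial points; this is where the $L$-invariance hypothesis and the isometry lemma do the real work, and where one must be careful that the semigroup identity of Theorem \ref{thm:foldedprocess} holds only quasi-everywhere, so that $x_1$ and $x_2$ should be taken off the relevant properly exceptional sets. Everything past that point is a transcription of the purely measure-theoretic coupling lemma, together with the elementary observation that conditional independence given $Z$ is automatic once $X_1$ is manufactured from $Z$ and independent noise.
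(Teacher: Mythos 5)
Your proposal is correct, and it rests on the same two pillars as the paper's own proof: equality in law of the folded processes (via Theorem \ref{thm:foldedprocess}, the isometry lemma following it, and $L$-invariance), followed by an application of the abstract coupling result, Lemma \ref{lem:coupling}. The difference is in the assembly. The paper proceeds symmetrically: it runs auxiliary diffusions $Y_1,Y_2$ from $x_1,x_2$, folds both to get processes $Z_1,Z_2$ equal in law, extracts \emph{two} functions $F_1,F_2$ from Lemma \ref{lem:coupling}, and then builds \emph{both} coupled processes $X_i = F_i(Z,U_i)$ on a fresh probability space carrying an abstract copy $Z$ and two independent uniforms $U_1,U_2$; property (3) is then immediate because $Z,U_1,U_2$ are mutually independent. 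You instead keep $X_2$ as a concrete diffusion, take $Z = \phi_{S_2}(X_2)$, and reconstruct only $X_1 = F(Z,U)$, so you invoke Lemma \ref{lem:coupling} once and need no abstract carrier space; the price is that (3) is no longer automatic and requires exactly the computation you sketch --- that a measurable function of $Z$ and of noise independent of $X_2$ is conditionally independent of $X_2$ given $Z$ --- which is a correct and standard argument (condition on $Z$, integrate out $U$ by Fubini). Your graph argument for (2) is also sound, since the graph of the Borel map $w \mapsto \Phi\circ\phi_{S_1}(w)$ on path space is measurable, so full measure of the graph transfers under equality of joint laws; this is the same device the paper uses, stated more carefully. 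Finally, both routes share the same implicit caveats, which you rightly flag: the folded starting points must actually coincide (the lemma tacitly assumes $x_1$ and $x_2$ are identified under $\Phi$, as in its application in Theorem \ref{thm:coupling}), and $x_1,x_2$ must avoid the properly exceptional sets of Theorem \ref{thm:foldedprocess}.
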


\begin{proof} Let $Y_1, Y_2$ be diffusions corresponding to the Dirichlet form \C{E} that are equal in law and started at $x_1,x_2$ respectively.  Set $Z_i = \Phi \circ \phi_{S_i}(Y_i)$ for $i=1,2$. The Dirichlet form for $\phi_{S_i}(Y)$ is $\C{E}^{S_i}$ and the $Z_i$ have the same starting point they also are equal in law. Then Lemma \ref{lem:coupling} can be used to find functions $F_i$ such that $(F_i(Z_i,U,Z_i)$ are equal in law to $(Y_i,Z_i)$ for $i=1,2$, if $U$ is an independent uniform random variable on $[0,1]$.

On a probability space supporting a process $Z$ with the same law as the $Z_i$ and two independent random variables $U_1,U_2$ independent of $Z$ which are uniform on $[0,1]$ will be $X_i = F_i(Z,U_i),\ i=1,2$ that will satisfy the three properties in the statement of this lemma.

Since the $X_i$ is equal in law to $F_i(Z_i,U_i)$, which are equal in law to $Y_i,\ i=1,2$, this establishes (1). Similarly $(X_i,Z)$ are equal in law to $(F(Z_i,U_i),Z_i)$, which are equal in law to $(Y_i,Z_i)$. Because $Z_i = \Phi \circ \phi_{S_i}(Y_i)$ it follows from the equality in law that $Z=\Phi \circ \phi_{S_1}(Y_1) = \Phi \circ \phi_{S_2}(Y_2).$ This is (b).

As $X_i = F_i(Z,U_i)$ for $i=1,2$ and $Z,U_1$, and $U_2$ are independent, (3) is immediate.
\end{proof}

Given a pair of $\C{E}-$diffusions $X_1(t)$ and $X_2(t)$ define the \emph{coupling time}
\begin{equation}
	T_C(X_1,X_2) = \inf \{ t\ge 0: X_1(t) = X_2(t)\}.
\end{equation}
The coupling time of two diffusions is simply the first time they take a common value and then evolve together from that value.

\begin{theorem}\label{thm:coupling}
Let $r>0$, $\epsilon >0$, and $r' = r/(j+1)^{2}$. There exist constants $q$ and $\delta$, depending only on $L$ such that the following hold:
\begin{enumerate}
	\item Supose $x_1,x_2 \in L$ with $|x_1-x_2| < r'$ and $x_1 \sim_m x_2$ for $m \ge 1$. There exist $\C{E}-$diffusions $X_i(t), i=1,2,$ with $X_i(0)=x_i$, such that with 
	\begin{equation}
		\tau_i = \inf\{t \ge 0: X_i(t) \not\in B(x_i,r)\},
	\end{equation}
	We have
	\begin{equation}
		\B{P}(T_C(X_1,X_2) < \tau_1 \wedge \tau_2) > q.
	\end{equation}
	\item If in addition $|x_1-x_2| < \delta r$ and $x_1 \sim_m x_2$ for some $m \ge 1$ then 
	\begin{equation}
		\B{P}(T_C(X_1,X_2) < \tau_1 \wedge \tau_2) > 1-\epsilon.
	\end{equation}
\end{enumerate}
\end{theorem}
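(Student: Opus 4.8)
\emph{Proof proposal.} The plan is to follow the coupling strategy of \cite{BBKT2010}, replacing the carpet ``corner move'' with the Laakso ``slide move'' through a wormhole. First I would invoke Lemma \ref{lem:coupled} to realize the two diffusions on a common probability space: choosing cells $S_1 \ni x_1$ and $S_2 \ni x_2$ in $\C{S}_n$ (with $n$ fixed by the scale $r$) and the isometry $\Phi = \phi_{S_1}|_{S_2}$, the lemma produces $\C{E}$-diffusions $X_1,X_2$ from $x_1,x_2$ that share a common folded image $Z = \phi_{S_2}(X_2) = \Phi \circ \phi_{S_1}(X_1)$ and are conditionally independent given $Z$. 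The essential consequence is that for every $t$ the points $X_1(t)$ and $X_2(t)$ lie in the same fibre of the folding map, so they differ only through the local symmetry relating $x_1$ and $x_2$.

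Next I would identify the coupling event geometrically and thereby reduce part (1) to a one-dimensional hitting estimate. Since $x_1 \sim_m x_2$, the two points share an interval coordinate and have Cantor coordinates related by the transposition $T_m$ of the $m$-th digit; the hypothesis $|x_1-x_2| < r'$ forces this common interval coordinate to lie within $r'/2$ of a wormhole in $L_m \setminus L_{m-1}$, and by Definition \ref{def:laakso} the relation $\sim_m$ glues the two sheets precisely at such a wormhole. Hence the paired process couples exactly when the interval coordinate of $Z$ first reaches this wormhole --- the Laakso slide move --- so $T_C$ is dominated by that hitting time. Now the interval coordinate need travel only $<r'/2$ to reach the wormhole while $X_i$ must travel $\approx r$ to leave $B(x_i,r)$; since $r' = r/(j+1)^2$ the ratio of these distances is bounded away from $1$ uniformly in $r$, so Lemma \ref{lem:hitprob} applied to the two half-faces straddling the wormhole (equivalently, the gambler's-ruin bound for the essentially one-dimensional interval coordinate) yields a lower bound $q>0$ depending only on $L$. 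This is part (1).

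For part (2) I would iterate across scales. When $|x_1-x_2| < \delta r$ the common interval coordinate starts within $\delta r$ of the wormhole, so before either $X_i$ can diffuse out to distance $r$ the interval coordinate of $Z$ makes a number of excursions returning near the wormhole that grows without bound as $\delta \to 0$. By the strong Markov property each such excursion offers a conditionally independent opportunity to couple with probability at least $q$ by part (1), so the probability of failing to couple before exiting is at most $(1-q)^{N(\delta)}$ with $N(\delta)\to\infty$; choosing $\delta$ small enough makes this less than $\epsilon$. Scale invariance of the folding maps recorded in Lemma \ref{lem:phiprop} guarantees that all constants depend only on $L$ and not on $n$, $m$, or $r$.

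The hard part will be making the geometric reduction of the second step fully rigorous: showing that at the first visit of $Z$ to the wormhole the two conditionally-independent unfoldings are genuinely the same point of $L$ --- not merely in a common fibre --- and that the whole question collapses to a single one-dimensional hitting problem with constants uniform over the cell level $n$, the identification level $m$, and the radius $r$. This is the analogue of the corner-move analysis of \cite{BBKT2010}, simplified by the product-and-wormhole geometry but still the technical heart of the argument.
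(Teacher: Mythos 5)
Your proposal follows essentially the same route as the paper: the paper's proof simply invokes Lemma \ref{lem:coupled} for the coupled construction and Lemma \ref{lem:hitprob} for the hitting/cell-selection estimate, then defers to the slide-move argument of Theorem 3.25 in \cite{BarlowBass1999} (noting the absence of corner moves and that $q$ depends only on $j$ and $k$), which is exactly the skeleton you reconstruct, including the excursion-iteration for part (2). The one point to tighten --- which you correctly flag yourself --- is that coupling is not automatic when $Z$ reaches the wormhole, since the conditionally independent unfoldings need only lie in a common fibre; the fix is precisely Lemma \ref{lem:hitprob}: apply its conditional bound to each $X_i$ and use conditional independence to get probability at least $q_1^{2}$ that both land in the \emph{same} cell, where injectivity of the folding map on a single cell forces $X_1 = X_2$.
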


\begin{proof}
This proof follows from Lemmas \ref{lem:hitprob} and  \ref{lem:coupled} as in Theorem 3.25 in \cite{BarlowBass1999} since the cell structure of a Laakso space behaves similarly to the cell structure of a generalized Sierpinski carpet without the so called ``corner moves.'' In this theorem, the constant $q$ depends only $k$ and $j$, that is only on the Laakso space and not on $m$. 
\end{proof}

\subsection{Elliptic Harnack Inequality}\label{ssec:ehi}
Let $D$ be a relatively open subset of $L$ and $D'$ a relatively open subset of $D$. Let $X_t$ be a Markov process and $\tau_{D'}$ be the stopping time of $X_t$ leaving $D'$. If $h(X_{\tau_{D'}})$ is a martingale under $\B{P}^{x}$ for quasi every $x \in D'$ then $h$ is probabilistically harmonic with respect to $X_t$. With this notion of a harmonic function we can state the elliptic Harnack inequality  which is a statement about the behavior of harmonic functions.

\begin{definition}\label{def:ehi}
The process $X$ satisfies the elliptic Harnack inequality (EHI) if there exists a constant $c_1$ such that the following holds: for any ball $B(x,R)$, whenever $u$ is a non-negative harmonic function on $B(x,R)$ then there is a quasi-continuous modification, $\tilde{u}$, of $u$ that satisfies
\begin{equation}
	\sup_{B(x,R/2)} \tilde{u} \le c_1 \inf_{B(x,R/2)} \tilde{u}.
\end{equation}
\end{definition}

We need a few intermediate results before we can prove that the EHI is satisfied by these invariant processes. 

\begin{lemma}
Let $\C{E} \in \F{C}$, $r \in (0,1)$, and $h$ be bounded and harmonic in $B=B(x_0,r)$. Then there exists $\theta>0$ such that 
\begin{equation}
	|h(x)-h(y)| \le C \left( \frac{|x-y|}{r} \right)^{\theta} \left( \sup_B |h| \right), \hspace{.5cm} x,y \in B(x_0,r/2), x \sim_m y.
\end{equation}
\end{lemma}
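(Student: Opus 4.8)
The plan is to derive this H\"older-type oscillation estimate from the coupling result in Theorem \ref{thm:coupling} via a standard iteration argument. The key observation is that for a harmonic function $h$, the quantity $h(X_t)$ is a martingale up to the exit time of the ball, so if $X_1$ and $X_2$ are two $\C{E}$-diffusions started at $x$ and $y$ respectively that couple before either leaves $B$, then the optional stopping theorem forces $h(x)$ and $h(y)$ to be close. More precisely, writing $M = \sup_B |h|$ and using that $X_1,X_2$ evolve together after the coupling time $T_C$, I would estimate
\begin{equation}
	|h(x)-h(y)| = |\B{E}[h(X_1(\tau_1\wedge\tau_2)) - h(X_2(\tau_1\wedge\tau_2))]| \le 2M\,\B{P}(T_C > \tau_1\wedge\tau_2),
\end{equation}
where the contributions on the event $\{T_C < \tau_1\wedge\tau_2\}$ cancel because the two diffusions have already merged.

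With this in hand, the first step is to fix a scale ratio and use part (2) of Theorem \ref{thm:coupling}: given $\epsilon>0$ there is a $\delta>0$, depending only on $L$, so that whenever $|x-y|<\delta\rho$ (and $x\sim_m y$) the coupling succeeds inside $B(x,\rho)$ with probability at least $1-\epsilon$. Choosing $\epsilon<1$ fixed, this yields a contraction: the oscillation of $h$ over a ball of radius $\delta\rho$ is at most $2\epsilon$ times the supremum of $|h|$ over the concentric ball of radius $\rho$. The second step is to iterate this across the dyadic-type scales $r, \delta r, \delta^2 r, \dots$, so that after $n$ steps the oscillation at scale $\delta^n r$ is controlled by $(2\epsilon)^n M$. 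The third step is the routine conversion of this geometric decay into a power law: given $x,y$ with $|x-y|$ comparable to $\delta^n r$, one sets $\theta = \log(1/(2\epsilon))/\log(1/\delta)$ and reads off
\begin{equation}
	|h(x)-h(y)| \le C\left(\frac{|x-y|}{r}\right)^{\theta} M,
\end{equation}
absorbing the interpolation between scales into the constant $C$.

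I expect the main obstacle to be the bookkeeping required to keep the hypotheses of Theorem \ref{thm:coupling} satisfied throughout the iteration, rather than the martingale or decay estimates themselves. Two points need care. First, the coupling theorem requires the relation $x\sim_m y$ at each stage, which is exactly the restriction carried in the statement of the lemma ($x\sim_m y$); I would verify that this wormhole-equivalence hypothesis is preserved as one passes to finer scales, using that the equivalence relations $\sim_m$ are nested in the appropriate way within a single cell. Second, the constant $q$ in part (1) of Theorem \ref{thm:coupling} depends only on $L$ and not on $m$, which is what makes the contraction ratio uniform across scales and hence the exponent $\theta$ well defined and independent of the particular pair of points; I would emphasize this uniformity, since without it the iterated bound would degrade and no single $\theta$ would work. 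The restriction to $x,y \in B(x_0,r/2)$ rather than all of $B(x_0,r)$ simply guarantees the intermediate balls used in the coupling stay inside the region where $h$ is harmonic, so that the supremum $M$ remains an admissible bound at every scale.
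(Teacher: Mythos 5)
Your proposal is correct and follows exactly the route the paper takes: the paper's proof simply invokes Theorem \ref{thm:coupling} and cites the coupling-iteration argument of Theorem 4.2 in \cite{BarlowBass1999}, which is precisely the martingale/optional-stopping cancellation, scale-by-scale contraction, and geometric-to-power-law conversion you spell out. Your attention to the uniformity of the coupling constant in $m$ and to keeping the intermediate balls inside the region of harmonicity matches the bookkeeping that argument requires.
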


\begin{proof}
This follows from Theorem \ref{thm:coupling} by the same argument as Theorem 4.2 in \cite{BarlowBass1999}.
\end{proof}

\begin{prop}\label{prop:holder}
Let $\C{E} \in \F{C}$ and $h$ be bounded and harmonic in $B(x_0,r)$. Then there exists a set $\C{N}$ of $\C{E}-$capacity 0 such that 
\begin{equation}
	|h(x)-h(y)| \le C \left( \frac{|x-y|}{r} \right)^{\theta} \left( \sup_B |h| \right), \hspace{.5cm} x,y \in B(x_0,r/2) \setminus \C{N}
\end{equation}
\end{prop}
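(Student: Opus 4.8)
The plan is to promote the preceding Lemma, which controls $|h(x)-h(y)|$ only across pairs with $x\sim_m y$, into a bound valid for quasi-every pair. The two ingredients are a quasi-continuous modification of $h$ and a density argument built from the folding geometry of $L$.

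First I would fix a quasi-continuous modification $\tilde{h}$ of $h$, which exists off a set $\C{N}_0$ of $\C{E}$-capacity zero since bounded harmonic functions admit quasi-continuous versions by the standard theory \cite{FOT1994}. Applied to $\tilde{h}$, the Lemma supplies the H\"older bound for every pair $x\sim_m y$ in $B(x_0,r/2)\setminus\C{N}_0$, with $\theta$ and $C$ independent of $m\ge 1$.

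The geometric core is to exhibit, for a suitably chosen base point $x_\ast\in B(x_0,r/2)\setminus\C{N}_0$, a set $D=\{y:\ y\sim_m x_\ast\text{ for some }m\ge 1\}$ that is dense in $B(x_0,r/2)$ and all of whose pairs are mutually $\sim_m$-related. Density uses both folding mechanisms of Definition \ref{def:folding}: at level $m$ the interval fold $\varphi_S$ has about $d_m$ preimages of any value, spaced about $d_m^{-1}$ apart, while the Cantor fold $K_S=\psi_a\circ\sigma^m$ identifies any two fibre words agreeing beyond the $m$-th digit; letting $m\to\infty$ lets one match any prescribed point of $B(x_0,r/2)$ to within $\epsilon$ in both coordinates at once. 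Mutual relatedness follows from the nesting $\sim_m\subseteq\sim_{m'}$ for $m\le m'$ (Lemma \ref{lem:phiprop}(4)) together with the well-definedness and equivalence-relation property of each $\sim_m$ (Lemma \ref{lem:phiprop}(3)): if $y_1\sim_{m_1}x_\ast$ and $y_2\sim_{m_2}x_\ast$, then $y_1\sim_m y_2$ with $m=\max(m_1,m_2)$. Hence $\tilde{h}|_D$ satisfies the H\"older bound, is uniformly continuous, and extends uniquely to a H\"older-continuous $g$ on $\overline{D}=B(x_0,r/2)$.

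It remains to show $g=\tilde{h}$ off a capacity-zero set, and here the main difficulty appears: $D$ is countable, hence $\mu$-null, so one cannot simply invoke the principle that two quasi-continuous functions agreeing $\mu$-a.e.\ agree quasi-everywhere. Instead I would use fine continuity: for quasi-every starting point the diffusion immediately meets $D$ (via the hitting bounds of Lemma \ref{lem:hitprob} and the continuity of paths), and $t\mapsto\tilde{h}(X_t)$ is a continuous martingale, so $\tilde{h}$ acquires along the process the modulus of continuity already verified on $D$. This forces $\tilde{h}=g$ outside a capacity-zero set $\C{N}\supseteq\C{N}_0$, exactly as in Theorem 4.2 of \cite{BarlowBass1999}, and the stated estimate then holds on $B(x_0,r/2)\setminus\C{N}$. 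I expect this final passage — from a modulus of continuity on the $\mu$-null dense set $D$ to genuine H\"older continuity quasi-everywhere — to be the technical crux, as it is there that quasi-continuity, fine continuity, and the path regularity of the diffusion must be used together.
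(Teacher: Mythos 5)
Your construction is sound for its first two thirds: the orbit $D$ of a base point under the folding maps is countable, dense, and mutually $\sim_m$-related, and (after the standard enlargement of $\C{N}_0$ to a folding-invariant properly exceptional set, using Theorem \ref{thm:foldedprocess} and the countability of $\bigcup_n \C{S}_n$, so that a base point outside $\C{N}_0$ has its whole orbit outside $\C{N}_0$ --- the point you wave at with ``suitably chosen'') the preceding Lemma makes $\tilde{h}|_D$ H\"older, with a H\"older-continuous extension $g$ to $\overline{D} \supseteq B(x_0,r/2)$. The genuine gap is the step you yourself flag as the crux: the identification $\tilde{h}=g$ quasi-everywhere. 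Your mechanism --- ``for quasi-every starting point the diffusion immediately meets $D$'' --- is false in general. $D$ is countable, and on Laakso spaces of Hausdorff dimension $Q \ge 2$ (such spaces exist and are covered by the paper) points are polar for these diffusions: the walk dimension is $2$ (Theorem \ref{thm:hk} gives $HK(H,2,2,c_0)$ with $H(r)=r^{2}$), so the spectral dimension equals $Q$, and once $Q \ge 2$ singletons, hence countable sets, have $\C{E}$-capacity zero and are a.s.\ never hit at positive times. Lemma \ref{lem:hitprob} cannot rescue this: it bounds hitting probabilities of half-faces, which are uncountable sets of positive capacity, and says nothing about hitting a countable set. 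Path continuity does not help either, since a nondegenerate continuum in $L$ need not meet a countable dense set; for instance the line $t \mapsto \iota(t,w_0)$ misses $D$ for all but countably many fibre words $w_0$. Even in the case $Q<2$, where points are non-polar, immediate hitting of a specific countable set would require a Wiener-type thinness estimate that you do not supply. So the martingale argument never gets started, and, as you correctly observe, the a.e.-to-q.e.\ principle is also unavailable because $\mu(D)=0$; the identification is left unproven exactly where all of the difficulty sits.

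For context: the paper does not prove this step either --- its entire proof is the citation that the proof of Proposition 4.20 in \cite{BBKT2010} carries over unchanged. Your proposal is therefore an attempted reconstruction of that borrowed argument, correct up to the point where the real work happens and incorrect there. Note that whatever mechanism \cite{BBKT2010} uses to pass from the related-pairs estimate to a quasi-everywhere estimate must remain valid when points are polar (generalized Sierpinski carpets with spectral dimension at least $2$ are within its scope), so it cannot be a hitting or fine-continuity argument aimed at the countable orbit $D$; your final step needs to be replaced by that mechanism, not patched.
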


\begin{proof} The proof of Proposition 4.20 in \cite{BBKT2010} carries over to this situation unchanged. 
\end{proof}

\begin{lemma}\label{lem:hitscaling}
Let $\C{E} \in \F{C}$. Then there exist constants $\kappa, 0< C_1$ depending only on $L$ such that if $0 < r<1, x \in L, y \in B(x,C_1r)$ then for all $0 < \delta < C_1$,
\begin{equation}
	\B{P}^{y}(T_{B(x,\delta r)} < \tau_{B(z,r)}) > \delta^{\kappa}.
\end{equation}
\end{lemma}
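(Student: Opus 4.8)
The plan is to obtain the power bound by a chaining argument across a geometric sequence of scales, using the half-face hitting estimate of Lemma \ref{lem:hitprob} as the building block at each scale. Fix a reduction ratio $\lambda \in (0,1)$ compatible with the cell structure (for definiteness one may take $\lambda = (j+1)^{-2}$, the ratio appearing in Theorem \ref{thm:coupling}), and choose $C_1 < 1$ small enough that $B(x, C_1 r) \subseteq B(z,r)$ for the configurations in the statement. Given $0 < \delta < C_1$, let $n$ be the least integer with $C_1 \lambda^{n} \le \delta$, so that $B(x, C_1\lambda^{n} r) \subseteq B(x,\delta r)$ and $n \le \log(1/\delta)/\log(1/\lambda) + C$ for a constant $C$ depending only on $\lambda$ and $C_1$.

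First I would set up the nested balls $B_i = B(x, C_1 \lambda^{i} r)$ for $i = 0, \dots, n$, so that $y \in B_0$ and $B_n \subseteq B(x, \delta r)$, with every $B_i$ contained in $B(z,r)$. The object to bound below is the probability that $X$ visits $B_n$ before time $\tau_{B(z,r)}$. The strategy is to show that passing from $B_{i-1}$ into the smaller ball $B_i$ costs only a uniformly bounded factor of probability, and then to multiply these factors over the $n$ scales using the strong Markov property.

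For the per-scale estimate I would argue as follows. A point on $\partial B_{i-1}$ lies at distance comparable to $C_1 \lambda^{i-1} r$ from $x$, hence inside a cell whose diameter is comparable to that distance by the two-sided control $j \le j_i \le j+1$ on the contraction factors. Since this distance is at most $C_1 r \ll r$, the union $S_*$ of the relevant level cells is contained in $B(z,r)$, so Lemma \ref{lem:hitprob} applies and yields a lower bound, depending only on $k$, for the probability of hopping to an adjacent half-face lying closer to $x$ before exiting $S_*$, and \emph{a fortiori} before $\tau_{B(z,r)}$. A bounded number (depending only on $\lambda$ and the geometry) of such half-face hops reduces the distance to $x$ by the factor $\lambda$, so there is a constant $p_0 = p_0(L) \in (0,1)$ with $\B{P}(\,X \text{ reaches } B_i \text{ before } \tau_{B(z,r)} \mid \C{F}_{T_{B_{i-1}}}) \ge p_0$ on the event $\{T_{B_{i-1}} < \tau_{B(z,r)}\}$. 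Chaining over $i = 1, \dots, n$ by the strong Markov property gives
\begin{equation}
	\B{P}^{y}\!\left(T_{B(x,\delta r)} < \tau_{B(z,r)}\right) \ge \B{P}^{y}\!\left(T_{B_n} < \tau_{B(z,r)}\right) \ge p_0^{\,n},
\end{equation}
and substituting $n \le \log(1/\delta)/\log(1/\lambda) + C$ yields $p_0^{\,n} \ge c\,\delta^{\kappa}$ with $\kappa = \log(1/p_0)/\log(1/\lambda)$; shrinking $C_1$ to absorb the constant $c$ then gives the stated strict inequality.

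The main obstacle is proving that the per-scale probability $p_0$ is genuinely uniform in $i$ and in the base point, so that $\kappa$ and $C_1$ depend only on $L$. This requires matching each metric ball $B_i$ to a union of cells of a single level cleanly enough that Lemma \ref{lem:hitprob} applies with constants independent of the scale, which is delicate precisely because the sequence $\{j_i\}$ need not be constant: the cell level $n_i$ associated with radius $C_1 \lambda^{i} r$ must be chosen so that $d_{n_i}^{-1}$ stays comparable to $\lambda^{i}$. The bounds $j_i \in \{j, j+1\}$ keep this ratio between fixed positive constants, which is exactly what forces $p_0$, and hence $\kappa$, to depend only on $L$; carrying out this matching and the counting of half-face hops is where the argument requires the most care, and it parallels the corresponding hitting estimate in \cite{BarlowBass1999}.
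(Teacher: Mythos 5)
Your chaining argument is essentially the paper's own proof: the paper disposes of this lemma by citing Corollary 3.24 of \cite{BarlowBass1999}, whose argument is exactly this multi-scale iteration of slide-move (half-face hitting) estimates via the strong Markov property, with the exponent $\kappa$ determined by the number of cell levels separating the scales $\delta r$ and $r$ --- precisely your $\kappa = \log(1/p_0)/\log(1/\lambda)$. Your write-up, using Lemma \ref{lem:hitprob} as the per-scale building block and observing that the bounds $j_i \in \{j, j+1\}$ make the per-scale probability uniform, supplies the details the paper leaves to the citation (modulo the standard cosmetic point that absorbing the multiplicative constant requires slightly enlarging $\kappa$ as well as shrinking $C_1$).
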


\begin{proof} This follows from the cell structure by the same argument as Corollary 3.24 in \cite{BarlowBass1999}. Again the only difference is that Laakso spaces only have slide moves and no corner moves. The constant $\kappa$ measures the difference in ``depth'' of $B(x,\delta r)$ and $B(z,r)$. That is if $r$ is between $d_m^{-1}$ and $d_{m+1}^{-1}$ while $\delta r$ is between $d_{m+4}^{-1}$ and $d_{m+5}^{-1}$, then $\kappa = 3$. Also $C_1 < 1/2$. 
\end{proof}

\begin{theorem}\label{thm:ehi}
The Markov processes associated to the Dirichlet forms in \F{C} satisfy the elliptic Harnack inequality. That is for any $z \in L$ and $h$ non-negative and harmonic on $B(z,r)$ that a quasi-continuous modification $\tilde{h}$ satisfies
\begin{equation}
	\tilde{h}(x) \le c \tilde{h}(y), \hspace{1cm} x,y \in B(z,r/2) \setminus \C{N}
\end{equation}
for some constant $c$ that depends only on $L$ and \C{E}-capacity zero set \C{N}.
\end{theorem}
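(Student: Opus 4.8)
The plan is to derive the elliptic Harnack inequality from the H\"older continuity estimate in Proposition \ref{prop:holder} together with the hitting-probability scaling in Lemma \ref{lem:hitscaling}, following the oscillation-reduction scheme used for Harnack inequalities on fractals. The statement to be proved replaces the usual $\sup \le c \inf$ formulation (Definition \ref{def:ehi}) with the pointwise bound $\tilde h(x) \le c\,\tilde h(y)$ off a capacity-zero set; these are equivalent once one has a quasi-continuous modification, so the first step is to fix such a modification $\tilde h$ of the non-negative harmonic $h$ on $B(z,r)$ and work with the exceptional set $\C{N}$ on which the earlier results fail.

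First I would reduce to a bound on the oscillation of $\tilde h$. The key mechanism is standard: if $X$ started near a point where $\tilde h$ is small must, with probability bounded below by Lemma \ref{lem:hitscaling}, hit a region before exiting a larger ball, and if $\tilde h$ is harmonic so that $\tilde h(y) = \B{E}^y[\tilde h(X_{\tau})]$ by the optional stopping/martingale property in the definition of probabilistic harmonicity, then a lower bound on $\tilde h$ at a single point propagates to a lower bound on a whole ball. Concretely, I would show there is $\eta \in (0,1)$ such that for any ball $B(w,s) \subset B(z,r)$ the oscillation of $\tilde h$ over $B(w,s/2)$ is at most $\eta$ times its oscillation over $B(w,s)$; iterating this across the dyadic (or $d_m^{-1}$-adic) scales of the Laakso cell structure gives that $\tilde h$ is H\"older, which is exactly Proposition \ref{prop:holder}, and combined with non-negativity yields the Harnack comparison. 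The hitting estimate provides the probabilistic coupling between values at distant points, while harmonicity converts hitting probabilities into quantitative bounds on the ratio $\tilde h(x)/\tilde h(y)$.

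Second, I would handle the passage from the relation $x \sim_m y$ in Proposition \ref{prop:holder} to arbitrary pairs $x,y \in B(z,r/2)\setminus\C{N}$. The earlier H\"older estimate is stated only for points in a common equivalence class, reflecting the wormhole identifications; to obtain the full EHI I would chain finitely many intermediate points, each related to the next by some $\sim_m$, using the geodesic metric of Proposition \ref{prop:laaksoproperties} to control the total number of links and hence the accumulated constant. Since the metric is geodesic and the Hausdorff dimension $Q$ is finite, the number of cells of a given scale crossed by a geodesic between two points of $B(z,r/2)$ is bounded in terms of $L$ alone, so the constant $c$ depends only on $L$ and $\C{E}$ as claimed.

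The main obstacle I expect is the uniform control of constants across the infinitely many scales of the Laakso space, where the cell count $m_L^n$ is not simply $(m_L)^n$ because the sequence $j_i$ need not be constant. The hitting exponent $\kappa$ in Lemma \ref{lem:hitscaling} and the H\"older exponent $\theta$ must be shown to be bounded below uniformly in the scale despite the varying contraction ratios $j_i \in \{j,j+1\}$; this is where the restriction $j_i \in \{j,j+1\}$ is essential, since it bounds all the relevant geometric ratios between $j^{-1}$ and $(j+1)^{-1}$ and keeps the oscillation-reduction factor $\eta$ uniform. Once that uniformity is in hand the iteration closes and the EHI follows, as in the corresponding argument of \cite{BarlowBass1999,BBKT2010}.
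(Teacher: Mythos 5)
Your proposal assembles exactly the ingredients the paper uses --- Proposition \ref{prop:holder}, Lemma \ref{lem:hitscaling}, and the martingale/optional stopping property of harmonic functions --- but the logical chain you declare for combining them has a genuine gap. You state that oscillation reduction gives H\"older continuity, ``which is exactly Proposition \ref{prop:holder}, and combined with non-negativity yields the Harnack comparison.'' That last implication is false: H\"older continuity of harmonic functions is strictly weaker than the EHI, because the oscillation bound in Proposition \ref{prop:holder} is relative to $\sup_B |h|$ and applies equally to sign-changing harmonic functions, for which Harnack fails; non-negativity cannot be appended to the H\"older estimate after the fact. Non-negativity must enter inside the probabilistic step, and that is where the paper uses it: after normalizing $\inf_{B(z,r/2)} h = 1$ and taking $y$ (nearly) attaining this infimum, optional stopping together with $h \ge 0$ (to discard the contribution on the event $\{T_{B(x,\delta r)} \ge \tau_{B(z,r)}\}$) gives
\begin{equation}
	1 = h(y) \ge \B{E}^{y}\left[ h(X(T_{B(x,\delta r)}));\ T_{B(x,\delta r)} < \tau_{B(z,r)} \right] \ge \delta^{\kappa} \inf_{B(x,\delta r)} h,
\end{equation}
hence $\inf_{B(x,\delta r)} h \le \delta^{-\kappa}$ for \emph{every} such small ball in the half-ball. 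Only then is the H\"older estimate invoked, to cap $\sup_{B(x,\delta r)} h$ by $\delta^{-\kappa} + C\delta^{\theta}\sup_{B(z,r)} h$ and close the argument by an iteration in $\delta$ (the paper defers this final oscillation control to Theorem 4.3 of \cite{BBKT2010}). Your writeup gestures at this (``harmonicity converts hitting probabilities into quantitative bounds on the ratio'') but never states the combination, and your propagation claim runs in the wrong direction: the displayed inequality propagates a lower bound on a ball to a lower bound at a point, not a bound at a point to a bound on a ball.

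A secondary issue: your second paragraph sets out to remove the restriction $x \sim_m y$ from ``the earlier H\"older estimate,'' attributing that restriction to Proposition \ref{prop:holder}. In fact only the unnumbered lemma preceding it carries the $\sim_m$ hypothesis; Proposition \ref{prop:holder} already holds for arbitrary $x,y \in B(x_0,r/2)\setminus\C{N}$, so your chaining construction re-proves a step the paper has already completed and is not needed in the proof of this theorem.
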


\begin{proof} 
By looking at $h+\epsilon$ and then letting $\epsilon \downarrow 0$, we may assume that $h$ is bounded below by a positive constant in $B(z,r)$. Multiplying by a constant, we may assume that $\inf_{B(z,r/2)} h = 1$. By Proposition \ref{prop:holder} we have that $h$ is bounded, positive, and quasi-continuous in $B(z,r/2)$ so it is it's own quasi-continuous modification. 

By Lemma \ref{lem:hitscaling} we have for $x,y \in B(z,C_1r)$ that
\begin{equation}
	\B{P}^{y}(T_{B(x,\delta r)} < \tau_{B(z,r)}) > \delta^{\kappa}.
\end{equation}
For some $\delta \in (0,C_1)$. Which gives the estimate of the minimum of $h$ on 
\begin{equation}
	1 = h(y) \ge \B{E}^{y}[h(X(T_{B(x,\delta r)}));T_{B(x,\delta r)} < \tau_{B(z,r)}] \ge \delta^{\kappa} \inf_{B(x,\delta r)} h
\end{equation}
so that
\begin{equation}
	\inf_{B(x,\delta r)} h \le \delta^{-\kappa} \hspace{1cm} x \in B(z,\delta r).
\end{equation}

 We then follow the argument in the proof of Theorem 4.3 in \cite{BBKT2010} to to control the oscillation of harmonic functions using the H\"older continuity from above in terms of a power of $\delta$ that depends only on $L$. 
\end{proof}

\section{Heat Kernel Estimates}\label{sec:hkest}

While waiting for \cite{GrigorTelcs2011} to appear as a preprint the authors of \cite{BBKT2010} wrote a set of supplementary notes \cite{BBKT2008b}. In the these notes conditions equivalent to two-sided Gaussian estimates on a heat kernel are given. More detailed accounts are of course to be found in \cite{GrigorTelcs2011}. In \cite{BBKT2010} one set of equivalent conditions are used, but we shall use the second. We define the conditions in the theorem before stating the theorem itself.

\begin{definition}\label{def:vd}
A metric measure space $(X,d,\mu)$ has the \emph{volume doubling} (VD) property if there exists a constant $C_1$ such that 
\begin{equation}
	\mu(B(x,2R)) \le C_1 \mu(B(x,R) \hspace{1cm} \forall x \in X, 0 \le R \le 1.
\end{equation}
Where $B(x,R)$ is the metric ball of radius $R$ centered at $x$.
\end{definition}

We will have need of a function with two useful properties time doubling and fast time growth. Let $H(r):[0,2] \rightarrow [0,\infty)$ be strictly increasing and $H(1) \in [C_2,C_3]$ where $C_2,C_3$ are positive constants. Then $H(r)$ has the \emph{time doubling} property if there exists a positive constant, $C_4$ such that
\begin{equation}
	H(2R) \le C_4H(R) \hspace{1cm} \forall R \in (0,1]. \label{eqn:h2}
\end{equation}
The function $H(r)$ has the \emph{fast time growth} property if there exists $C_5 >0$ and $\beta_1 >1$ such that 
\begin{equation}
	\frac{H(R)}{H(r)} \ge C_5 \left( \frac{R}{r} \right)^{\beta_1}.
\end{equation}
If $\beta_2 = \log(C_4)/\log(2)$ then we have two sided estimates on $H(R)/H(r)$ of the form:
\begin{equation}
	C_5 \left( \frac{R}{r} \right)^{\beta_1} \le \frac{H(R)}{H(r)} \le C_7 \left( \frac{R}{r} \right)^{\beta_2}. \label{eqn:h1}
\end{equation}

\begin{definition}\label{def:timescale}
A function $H(r)$ with these properties is called a \emph{time scaling function} $(H,\beta_1,\beta_2,)$.
\end{definition}

A pertinent example of a time scaling function $(H,\beta,\beta)$ is $H(r) = r^{\beta}$.

\begin{definition}\label{def:resist}
Let $(X,d,\mu,(\C{D},\C{F}))$ be a metric measure Dirichlet space and $A,B$ be disjoint subsets of $X$. The \emph{effective resistance}, $R_{\C{E}}(A,B)$, is given by 
\begin{equation}
	R_{\C{E}}(A,B)^{-1} = \inf \left\{ \C{E}(f,f) : f=0\ on\ A\ and\ f=1\ on\ B , f \in \C{F} \right\}.
\end{equation}
We say that $X$ satisfies the condition $RES(H)$ if there exists constants $c_1,c_2$ and a time scaling function, $H(r)$, such that for any $x_0 \in X, 0 \le R\le \frac{1}{3}$,
\begin{equation}
	c_1 \frac{H(R)}{\mu(B(x_0,R))} \le R_{\C{E}}\left(B(x_0,R), B(x_0,2R)^{C} \right) \le c_2\frac{H(R)}{\mu(B(x_0,R))}.
\end{equation}
\end{definition}

\begin{definition}\label{def:exit}
Let $H(r)$ be a time scaling function and $\tau_{B(x_0,R)}$ is the first time that $X_t$ exists the metric ball $B(x_0,R)$. Then if for $0 < R \le \frac{1}{3}$ and any $x_0 \in L$ the estimate
\begin{equation}
	c_1H(R) \le \B{E}^{x_0} \left[ \tau_{B(x_0,R)} \right] \le c_2H(R)
\end{equation}
Holds then we say that $X_t$ satisfies the exit time condition $(E(H))$.
\end{definition}

\begin{lemma}\label{lem:exittime}
The strong Markov processes associated to Dirichlet forms in \F{C} satisfy the exit time condition $(E(H))$. 
\end{lemma}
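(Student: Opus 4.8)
The plan is to derive the two-sided bound on $\B{E}^{x_0}[\tau_{B(x_0,R)}]$ from the resistance condition $RES(H)$ of Definition \ref{def:resist} together with volume doubling, using the standard relationship between mean exit time, effective resistance, and volume. Writing $B=B(x_0,R)$ and letting $g_B$ be the Green function of the diffusion killed on leaving $B$, the point of departure is the identity
\begin{equation}
	\B{E}^{x_0}[\tau_B] = \int_B g_B(x_0,y)\, d\mu(y).
\end{equation}
The goal is to show that this integral is comparable to $\mu(B(x_0,R))\cdot R_{\C{E}}(B(x_0,R/2),B(x_0,R)^{C})$. I use the \emph{annular} resistance rather than a point-to-boundary resistance precisely so that the argument is insensitive to whether the Hausdorff dimension $Q$ of Proposition \ref{prop:laaksoproperties} exceeds the walk dimension; by $RES(H)$ applied at scale $R/2$ and the doubling of $H$ and of $\mu$, this annular resistance is comparable to $H(R)/\mu(B(x_0,R))$, so the volume factors cancel and the product collapses to $H(R)$ up to constants.

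For the lower bound I would use Lemma \ref{lem:hitscaling}: since from any $y\in B(x_0,C_1R)$ the process reaches the inner ball $B(x_0,\delta R)$ before leaving $B(x_0,R)$ with probability at least $\delta^{\kappa}$, the process cannot exit $B$ at anomalously short times, which bounds $g_B(x_0,\cdot)$ from below on a fixed proportion of $B$; integrating, and using the elliptic Harnack inequality (Theorem \ref{thm:ehi}) to keep $g_B(x_0,\cdot)$ comparable to its annular value there, gives $\B{E}^{x_0}[\tau_B]\ge c_1 H(R)$. For the upper bound I would cap $g_B(x_0,\cdot)$ by the resistance upper bound in $RES(H)$ and sum the contributions of the successive dyadic annuli $B(x_0,2^{-j}R)\setminus B(x_0,2^{-j-1}R)$; the fast time growth half of \eqref{eqn:h1} forces this geometric sum to be dominated by its leading term, yielding $\B{E}^{x_0}[\tau_B]\le c_2 H(R)$. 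Volume doubling (Definition \ref{def:vd}), which for Laakso spaces follows from the product structure of the measure in Proposition \ref{prop:laaksoproperties}, is what lets the annular volumes be compared across scales, and the time doubling property \eqref{eqn:h2} absorbs the fact that the Laakso length scales $d_n^{-1}$ are only lacunary and not exactly self-similar.

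The step I expect to be the main obstacle is the two-sided control of $g_B(x_0,\cdot)$ off the diagonal, equivalently the assertion that the process neither leaves $B$ too quickly nor lingers anomalously long. The no-lingering half is the more delicate, and it is exactly where the essentially one-dimensional geometry of Laakso spaces is decisive: because energy is transported only in the $I$-direction, the annular resistance scales like $R^{2}/\mu(B(x_0,R))$, so that $H(R)\asymp R^{2}$ and $RES(H)$ becomes explicit, with Lemma \ref{lem:hitscaling} supplying the probabilistic lower bound that replaces the corner-move estimates of \cite{BBKT2010}. Once $RES(H)$ and volume doubling are in hand, the passage to $E(H)$ is the general equivalence recorded in \cite{BBKT2008b,GrigorTelcs2011} and used in \cite{BBKT2010}, so that the genuinely new content is the verification of these two hypotheses for Laakso spaces through Lemma \ref{lem:hitscaling} and the one-dimensional resistance computation.
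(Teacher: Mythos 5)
Your strategy---prove $RES(H)$ first and then pass to $E(H)$ via the equivalence of conditions (1) and (2) in Theorem \ref{thm:hkequiv}---is the route taken in \cite{BBKT2010} for carpets, and it is legitimate in principle; but as written it has a genuine gap: the condition $RES(H)$ is never actually established, by you or anywhere in the paper. The crucial assertion, that for every $\C{E}\in\F{C}$ the annular resistance scales like $R^{2}/\mu(B(x_0,R))$ ``because energy is transported only in the $I$-direction,'' is precisely the structural fact that needs proof. An arbitrary element of $\F{C}$ is an abstract invariant form with no formula attached; a ``one-dimensional resistance computation'' is available only for the explicit MGUG form $\C{E}_M$ of Proposition \ref{prop:mgugDF}. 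You also cannot transfer that computation from $\C{E}_M$ to a general $\C{E}$ by comparing energies, because the comparability of energies (Theorem \ref{thm:equivdomain}) is a consequence of the heat kernel estimates, which rest on this very lemma---that route is circular. Nor does Lemma \ref{lem:hitscaling} close the hole: it supplies hitting-probability (hence Green function) lower bounds, i.e.\ the ``no early exit'' half, while the ``no lingering'' half---which you yourself flag as the delicate one---is exactly the resistance upper bound that your sketch assumes rather than derives.

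The paper's own proof avoids resistance altogether, which is why it works with condition (2) of Theorem \ref{thm:hkequiv} rather than condition (1). It sandwiches the ball between cells: a maximal $N$-cell $S_1$ with $x_0\in S_1\subset B(x_0,R)$ and a minimal $M$-cell $S_2\supset B(x_0,R)$, with $diam(S_2)\le (j+1)\,diam(S_1)$, so that
\begin{equation}
	\B{E}^{x_0}[\tau_{S_1}] \le \B{E}^{x_0}[\tau_{B(x_0,R)}] \le \B{E}^{x_0}[\tau_{S_2}].
\end{equation}
It then observes that the exit time of a cell equals the exit time of the projected process $\pi(X_t)$ from an interval, and that projections of forms in $\F{C}$ onto $[0,1]$ lie in the same class as projections of locally symmetric forms, whose exit times scale as $r^{2}$ (quoting the results of \cite{BBKT2010} for the trivial carpet $[0,1]^{2}$ together with Theorem \ref{thm:hkequiv}). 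If you want to salvage your approach, it is this projection argument that would justify $H(R)\asymp R^{2}$ in $RES(H)$ for a general invariant form; but at that point you would already have proven $E(H)$ directly, and the detour through resistance becomes superfluous.
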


\begin{proof} Let $X_t$ be a Markov process corresponding to $\C{E} \in \F{C}$. Given $x_0 \in L$ and $R < 2 dist(x_0, \partial L)$ then exists a maximal $N \ge 0$ such that there exists an $N-$cell containing $x_0$ and entirely contained in $B(x_0,R)$ called $S_1$. Then $X_t$ starting at $x_0$ a.s. leaves this cell through its boundary, so 
\begin{equation}
	\B{E}^{x_0}[\tau_{B(x_0,R)}] \ge \B{E}^{x_0}[\tau_{S_1}].
\end{equation}
However $\B{E}^{x_0}[\tau_{S_1}]$ is equal to the exit time to $\pi(X_t)$ where $\pi$ is the projection of $S_1$ onto a line interval with length equal to the diameter of $S_1$. Similarly there is a minimal $M$ such that a $M$-cell, $S_2$,  containing $B(x_0,R)$ where the analogous upper bound can be formulated. Note that $diam(S_2) \le (j+1)S_1$ so that the lengths of these two cells is comparable. 

This reduces the problem to exit time estimates on intervals of the real line. Since the Dirichlet forms in \F{C} are local, regular, conservative, and symmetric so are their projections onto the real line. If the results of \cite{BBKT2010} are applied the the generalized Sierpinski carpet that is $[0,1]^{2}$ the heat kernel bounds are Gaussian. By Theorem \ref{thm:hkequiv} exit time estimates on the square are controled by $r^{2}$. If the processes associated with locally symmetric Dirichlet forms are projected onto $[0,1]$ we have the same class of processes as are obtained from projecting the processes associated to elements of \F{C} to $[0,1]$. Thus the exit times are controled by $H(r) =r^{2}$.
\end{proof}

\begin{definition}\label{def:hkgauss}
Let $H(r)$ be a time scaling function with inverse function $h(t)$. Then if there exists constants $c_0,\beta_1,\beta_2$ such that
\begin{equation}
 	\frac{1}{c_0 \mu(B(x,h(t)))}e^{-c_0(\frac{H(d(x,y))}{t})^{\frac{1}{\beta_1-1}}} \le p_t(x,y) \le \frac{c_0}{ \mu(B(x,h(t)))}e^{-c^{-1}_0(\frac{H(d(x,y))}{t})^{\frac{1}{\beta_2-1}}}
\end{equation}
Then the heat kernel $p_t(x,y)$ satisfies the condition $HK(H,\beta_1,\beta_2,c_0)$.
\end{definition}

Recall also the definition of the Elliptic Harnack Inequality from Definition \ref{def:ehi}. 

\begin{theorem}\label{thm:hkequiv}
Let $X,d,\mu,(\C{E},\C{F}))$ be a metric measure Dirichlet space with $d$ a geodesic metric and $(\C{E},\C{F})$ conservative. Let $H(r)$ be a time scaling function with constants $C_1 ... C_7, \beta_1, \beta_2$. Then the following statements are all equivalent.
\begin{enumerate}
	\item The space $X$ satisfies volume doubling, elliptic Harnack inequality, and $RES(H)$.
	\item The space $X$ satisfies volume doubling, elliptic Harnack inequality, and exit time estimates $(E(H))$.
	\item The space $X$ satisfies $HK(H,\beta_1,\beta_2,c_0)$.
\end{enumerate}
\end{theorem}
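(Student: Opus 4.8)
The plan is to prove the three statements equivalent through the cycle $(1) \Leftrightarrow (2)$ and $(2) \Leftrightarrow (3)$, so that the analytically deep content is concentrated in a single implication. Because volume doubling and the elliptic Harnack inequality of Definition \ref{def:ehi} are common to $(1)$ and $(2)$, the equivalence $(1) \Leftrightarrow (2)$ reduces to showing that, under VD and EHI, the resistance condition $RES(H)$ of Definition \ref{def:resist} and the exit time condition $E(H)$ of Definition \ref{def:exit} are interchangeable. The bridge is an Einstein-type relation: under VD and EHI one has, for every $x_0 \in X$ and $0 < R \le \frac13$,
\begin{equation}
	\B{E}^{x_0}\left[\tau_{B(x_0,R)}\right] \asymp \mu(B(x_0,R)) \cdot R_{\C{E}}\left(B(x_0,R), B(x_0,2R)^{C}\right),
\end{equation}
with implied constants depending only on the VD and EHI constants. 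Granting this, multiplying the two-sided resistance bounds of $RES(H)$ by $\mu(B(x_0,R))$ produces the exit time bounds of $E(H)$, and conversely. The Einstein relation itself is standard but not automatic: the lower bound on the mean exit time comes from a capacity estimate for the equilibrium potential of the annulus $B(x_0,2R) \setminus B(x_0,R)$, while the upper bound uses EHI to control harmonic measure and thereby bound the exit time by the resistance of the annulus.

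I would next dispatch the soft direction $(3) \Rightarrow (2)$. Volume doubling follows from the on-diagonal estimates $p_t(x,x) \asymp 1/\mu(B(x,h(t)))$ contained in $HK$: applying Chapman--Kolmogorov to $p_{2t}(x,x) = \int p_t(x,z)^2\, d\mu(z)$ together with the near-diagonal lower bound gives $\mu(B(x,h(2t))) \le C\mu(B(x,h(t)))$, which is VD once the time-doubling of $H$ is taken into account. The exit time estimate $E(H)$ follows by writing $\B{E}^{x_0}[\tau_{B(x_0,R)}] = \int_0^{\infty} \B{P}^{x_0}(\tau_{B(x_0,R)} > t)\, dt$ and bounding the survival probabilities from above and below by heat kernel integrals that $HK$ controls. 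Finally, EHI is the stationary shadow of the parabolic Harnack inequality, which is itself a formal consequence of the two-sided bounds $HK$.

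The substance of the theorem, and where I expect the main obstacle, is the implication $(2) \Rightarrow (3)$. The upper bound proceeds in two stages: VD and $E(H)$ first yield an on-diagonal bound $p_t(x,x) \le c/\mu(B(x,h(t)))$ via a Faber--Krahn or Nash-type inequality deduced from the mean exit time, and the off-diagonal sub-Gaussian tail with exponent $1/(\beta_2-1)$ is then recovered from a Davies--Gaffney integrated estimate optimized over a chain of annuli. The lower bound is more delicate: one first proves a near-diagonal lower bound $p_t(x,y) \ge c/\mu(B(x,h(t)))$ valid for $d(x,y) \le \varepsilon h(t)$, by combining the on-diagonal lower bound (which follows from conservativeness and the upper bound just obtained) with EHI and the H\"older continuity it forces on the heat kernel; one then propagates this to the full sub-Gaussian lower bound with exponent $1/(\beta_1 - 1)$ by a chaining argument along a geodesic, splitting the path from $x$ to $y$ into $\asymp H(d(x,y))/t$ pieces of comparable length and iterating Chapman--Kolmogorov. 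The geodesic hypothesis on $d$ and the conservativeness of $(\C{E},\C{F})$ enter precisely at this step, and the careful bookkeeping of constants needed to recover the exponents $\beta_1,\beta_2$ is the technical heart of the matter. This is exactly the machinery of \cite{GrigorTelcs2011,BBKT2008b}, to which I would appeal for the full details.
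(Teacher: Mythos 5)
The paper does not prove this theorem at all: it is imported verbatim as Theorem 1.3 of \cite{BBKT2008b} and a major result of \cite{GrigorTelcs2011}, the text adding only the remark that \cite{BBKT2010} uses the implication $(1) \Rightarrow (3)$ whereas here $(2) \Rightarrow (3)$ is the route needed, and that the implications are effective. So there is no internal argument to compare yours against; the relevant question is whether your sketch matches what the cited references actually do, and it does. The cycle you propose is the standard one: the Einstein-type relation $\B{E}^{x_0}\left[\tau_{B(x_0,R)}\right] \asymp \mu(B(x_0,R))\, R_{\C{E}}\left(B(x_0,R),B(x_0,2R)^{C}\right)$ under VD and EHI is precisely the bridge used to interchange $RES(H)$ and $E(H)$; the direction $(3) \Rightarrow (2)$ is soft as you say (with the small caveat that your VD argument needs Cauchy--Schwarz on $p_{2t}(x,x) = \int p_t(x,z)^{2}\, d\mu(z)$ plus conservativeness to keep enough heat mass in $B(x,h(t))$, not just the near-diagonal lower bound alone); and the hard direction $(2) \Rightarrow (3)$ via Faber--Krahn/Nash for the on-diagonal upper bound, annulus chaining for the sub-Gaussian tail, and geodesic chaining for the lower bound is exactly the machinery of \cite{GrigorTelcs2011}. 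Since you, like the paper, ultimately delegate the technical estimates to \cite{GrigorTelcs2011,BBKT2008b}, your proposal is best read not as an alternative proof but as a correct expansion of the citation the paper relies on; what it adds over the paper's bare citation is a visible accounting of where the geodesic and conservativeness hypotheses enter and why the constants $\beta_1,\beta_2,c_0$ are traceable through each implication, which the paper asserts without explanation.
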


This is Theorem 1.3 in \cite{BBKT2008b} and a major result of \cite{GrigorTelcs2011}. In \cite{BBKT2010} the $1 \Rightarrow 3$ is used instead because the resistance estimates had been well studied in previous work by these authors and others, \cite{BarlowBass1990,McGillivray2002}. The proofs of these implications are \emph{effective}, that is from the information in Theorem \ref{thm:hkequiv}.2 the constants $\beta_1,\beta_2$, and $c_0$ as well as the function $H$ could be given explicitly.

\begin{theorem}\label{thm:hk}
On any given Laakso space, the Laplacians associated to the Dirichlet forms in \F{C} all satisfy $HK(H,2,2,c_0)$, where $c_0$ depends only on $L$.
\end{theorem}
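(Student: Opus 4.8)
The plan is to obtain the Gaussian bound as a direct application of the equivalence Theorem \ref{thm:hkequiv}, via the implication $(2) \Rightarrow (3)$. For a fixed form $\C{E} \in \F{C}$ and its associated process, I must therefore verify the three hypotheses listed in part (2): volume doubling for $(L,d,\mu)$, the elliptic Harnack inequality, and the exit time estimate $E(H)$ for an appropriate time scaling function. Two of these are already available: Theorem \ref{thm:ehi} gives the EHI for precisely these processes, and Lemma \ref{lem:exittime} gives $E(H)$ with $H(r) = r^2$. As recorded after Definition \ref{def:timescale}, $H(r) = r^2$ is a time scaling function with $\beta_1 = \beta_2 = 2$; since the exponents appearing in Definition \ref{def:hkgauss} are $1/(\beta_i - 1) = 1$, the output of the equivalence is exactly $HK(H,2,2,c_0)$ with the usual Gaussian exponential $e^{-c\, d(x,y)^2/t}$ and the volume normalization $\mu(B(x,\sqrt{t}))$.

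The one hypothesis not yet established is volume doubling, which is a statement about the metric measure space alone and so is independent of the choice of form. I would verify it directly from the construction of Section \ref{sec:laakso}. For $x \in L$ and $d_{n+1}^{-1} \le R < d_n^{-1}$, the ball $B(x,R)$ meets only boundedly many depth-$n$ cells, each of which is an isometric image of a product $[x_i,x_{i+1}] \times K_a$ of $\mu$-measure comparable to $d_n^{-1} 2^{-nk}$. Because the branching ratios satisfy $j_i \in \{j,j+1\}$, doubling the radius increases the relevant depth by a bounded amount, and hence $\mu(B(x,2R)) \le C_1 \mu(B(x,R))$ with $C_1$ depending only on $j$ and $k$. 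This is the same Ahlfors-type regularity already implicit in Proposition \ref{prop:laaksoproperties}, where $\mu$ is shown to be equivalent to the $Q$-dimensional Hausdorff measure.

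With all three conditions of Theorem \ref{thm:hkequiv}(2) in place, the implication $(2)\Rightarrow(3)$ yields $HK(H,2,2,c_0)$ for each $\C{E} \in \F{C}$. The point requiring the most care, and the one I expect to be the main obstacle, is the assertion that $c_0$ depends only on $L$ and not on the particular form. This needs all of the constants feeding into VD, EHI, and $E(H)$ to be uniform over $\F{C}$: the volume doubling constant is form-independent by the preceding paragraph, the exit time constants of Lemma \ref{lem:exittime} depend only on $L$ because every form in $\F{C}$ projects to the same one-dimensional class of diffusions on $[0,1]$, and the EHI constant of Theorem \ref{thm:ehi} likewise depends only on $L$. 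Since the derivation of $(2)\Rightarrow(3)$ is effective, the resulting $\beta_1,\beta_2$ and $c_0$ are explicit functions of these form-independent data, and the uniformity over $\F{C}$ follows; this uniformity is exactly what the uniqueness argument of Section \ref{sec:uniq} will require.
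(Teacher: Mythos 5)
Your proposal is correct and follows essentially the same route as the paper: apply Theorem \ref{thm:hkequiv} with $(2)\Rightarrow(3)$, citing Theorem \ref{thm:ehi} for the EHI, Lemma \ref{lem:exittime} for $E(H)$ with $H(r)=r^2$, and Ahlfors regularity of the Laakso measure for volume doubling (the paper cites this from Laakso's original work, while you sketch the cell-counting verification directly). Your added remarks on the uniformity of $c_0$ over $\F{C}$, via the effectiveness of the implication and the form-independence of the three inputs, make explicit a point the paper's statement asserts but its proof leaves implicit.
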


\begin{proof} We using Theorem \ref{thm:hkequiv} to reduce this proof to checking three conditions, volume doubling, elliptic Harnack inequality and exit time estimates. Volume doubling is a consequence of the $Q-$Ahlfors regularity of Laakso spaces as given in \cite{Laakso2000}. The elliptic Harnack inequality was shown in Theorem \ref{thm:ehi}. The exit time estimates were shown in Lemma \ref{lem:exittime}.
\end{proof}

Following \cite[Theorem 4.1]{KumagaiSturm2005} the domain of an arbitrary $\C{E} \in \F{C}$ is characterized as a Besov-Lipschitz space. Let
\begin{eqnarray}
	J_r(f) &=& r^{-\alpha} \int_L \int_{B(x,r)} |f(x) - f(y)|^{2}\ d\mu(y)\ d\mu(x),\\
	N^{r}_H(f) &=& H(r)^{-1}J_r(f),\\
	N_H(f) &=& \sup_{0<r\le1} N_H^{r}(f),\\
	W_H &=& \{ u \in L^{2}(L,\mu_{\infty}) : N_H(f) < \infty \}.
\end{eqnarray}

\begin{theorem}\label{thm:equivdomain}
Let $H$ satisfy (\ref{eqn:h1}) and (\ref{eqn:h2}). Suppose $p_t$ satisfies $HK(H,\beta_1,\beta_2,c_0)$. Then 
\begin{equation}
	C_1\C{E}(f,f) \le \limsup_{j \rightarrow \infty} N_H^{r_j}(f) \le N_H(f) \le C_2\C{E}(f,f) \hspace{1cm} f \in W_H
\end{equation}
where the constants $C_i$ depend only on the constants in (\ref{eqn:h1}) and (\ref{eqn:h2}), and in $HK(H,\beta_1,\beta_2,c_0)$. Further
\begin{equation}
	\C{F} = W_H.
\end{equation}
\end{theorem}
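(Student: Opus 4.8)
The plan is to follow the strategy of \cite[Theorem 4.1]{KumagaiSturm2005}: realize both the Dirichlet energy and the Besov seminorm $N_H(f)$ as limits of the approximating quadratic forms attached to the heat semigroup, and then transfer the two-sided bounds $HK(H,\beta_1,\beta_2,c_0)$ into two-sided comparisons. Concretely, for $t>0$ I set
\[
\C{E}^{(t)}(f,f) = \frac{1}{2t}\int_L\int_L (f(x)-f(y))^2\, p_t(x,y)\,d\mu(y)\,d\mu(x),
\]
which equals $\tfrac{1}{t}\langle f - T_t f, f\rangle$ by conservativeness, and recall the standard fact that $\C{E}^{(t)}(f,f)$ increases to $\C{E}(f,f)$ as $t\downarrow 0$, with $f\in\C{F}$ precisely when $\sup_{t>0}\C{E}^{(t)}(f,f)<\infty$. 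Everything then reduces to comparing $\C{E}^{(t)}$ with the family $N_H^r$ at the matched scale $r=h(t)$ (so that $H(r)=t$), using the $Q$-Ahlfors regularity of $L$ to replace $\mu(B(x,h(t)))$ by $h(t)^Q$ and to fix the normalizing power $\alpha=Q$ in $J_r$.

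For the inequality $N_H(f)\le C_2\C{E}(f,f)$ I would use only the near-diagonal lower heat kernel bound: on $\{d(x,y)\le h(t)\}$ the estimate $HK$ gives $p_t(x,y)\ge c\,\mu(B(x,h(t)))^{-1}$, so discarding the off-diagonal contribution yields $\C{E}^{(t)}(f,f)\ge c\,N_H^{h(t)}(f)$. Since $\C{E}^{(t)}(f,f)\le\C{E}(f,f)$ for every $t$, taking the supremum over $r=h(t)\in(0,1]$ gives $N_H(f)\le C_2\C{E}(f,f)$. The middle inequality $\limsup_j N_H^{r_j}(f)\le N_H(f)$ is immediate from $N_H(f)=\sup_{0<r\le 1}N_H^r(f)$.

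The substantive direction is $C_1\C{E}(f,f)\le\limsup_j N_H^{r_j}(f)$, which rests on the upper heat kernel bound and a dyadic-annulus decomposition. Splitting the double integral over $\{d(x,y)\le h(t)\}$ and the annuli $A_k=\{2^{k-1}h(t)<d(x,y)\le 2^k h(t)\}$, I would bound the $k$-th term using the Gaussian factor together with fast time growth, $H(2^{k-1}h(t))/t\ge C_5(2^{k-1})^{\beta_1}$, to produce super-exponential decay $e^{-c(2^k)^{\beta_1/(\beta_2-1)}}$, while volume doubling and the upper time-scaling bound $H(2^k h(t))\le C_7(2^k)^{\beta_2}t$ contribute only the polynomial factor $(2^k)^{\alpha+\beta_2}$. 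This gives $\C{E}^{(t)}(f,f)\le C\sum_{k\ge 0}\rho_k\,N_H^{2^k h(t)}(f)$ with $\rho_k=C(2^k)^{\alpha+\beta_2}e^{-c(2^k)^{\beta_1/(\beta_2-1)}}$ and $\sum_k\rho_k<\infty$. Passing to a sequence $t_j\to 0$ with $r_j=h(t_j)$, a dominated-convergence argument (legitimate because $N_H^r\le N_H(f)<\infty$ and $\sum_k\rho_k<\infty$) lets me pull the limit through the sum and bound each $\limsup_j N_H^{2^k r_j}(f)$ by $\limsup_{r\to 0}N_H^r(f)$, yielding $\C{E}(f,f)=\lim_j\C{E}^{(t_j)}(f,f)\le C_1^{-1}\limsup_j N_H^{r_j}(f)$.

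Finally, $\C{F}=W_H$ follows from the norm equivalence: the uniform-in-$t$ bound $\C{E}^{(t)}(f,f)\le C\,N_H(f)$ shows that $N_H(f)<\infty$ forces $\sup_t\C{E}^{(t)}(f,f)<\infty$, hence $f\in\C{F}$, while the reverse containment is exactly $N_H(f)\le C_2\C{E}(f,f)$. The main obstacle is the bookkeeping in the annular estimate—checking that the interplay of the volume-doubling constant, both time-scaling exponents $\beta_1,\beta_2$, and the normalization $\alpha=Q$ yields a convergent series with constants independent of $t$—together with the Fatou/dominated-convergence step needed to sharpen the conclusion from $N_H(f)$ to $\limsup_j N_H^{r_j}(f)$.
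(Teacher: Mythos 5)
The first thing to note is that the paper does not actually prove Theorem \ref{thm:equivdomain}: the statement is imported wholesale from \cite[Theorem 4.1]{KumagaiSturm2005}, with the hypothesis $HK(H,\beta_1,\beta_2,c_0)$ supplied by Theorem \ref{thm:hk}, and even the sequence $(r_j)$ in the middle term is inherited from that reference without being defined here. So your proposal is not competing with an in-paper argument; it is a reconstruction of the proof of the cited result, and it follows what is essentially Kumagai--Sturm's own route: the approximating forms $\C{E}^{(t)}(f,f)=\frac{1}{2t}\int\int(f(x)-f(y))^2p_t\,d\mu\,d\mu$ (with conservativeness justifying the identity with $\frac1t\langle f-T_tf,f\rangle$, and spectral theory giving $\C{E}^{(t)}\uparrow\C{E}$ and the criterion $f\in\C{F}$ if and only if $\sup_t\C{E}^{(t)}(f,f)<\infty$), the near-diagonal lower heat kernel bound for $N_H(f)\le C_2\C{E}(f,f)$, and the dyadic-annulus decomposition with the Gaussian upper bound, fast time growth, volume regularity, and the normalization $\alpha=Q$ for the opposite direction. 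That skeleton is sound, and your summability bookkeeping $\rho_k=C(2^k)^{\alpha+\beta_2}e^{-c(2^k)^{\beta_1/(\beta_2-1)}}$ is the right one, with the usual extra step (which you flag) of discarding annuli whose radius exceeds $\mathrm{diam}(L)$; their contribution vanishes as $t\to 0$ because the Gaussian factor decays super-polynomially in the scale.

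There is, however, one logical slip at the very end. Your reverse-Fatou step gives $\C{E}(f,f)\le C\sum_k\rho_k\limsup_j N_H^{2^kr_j}(f)\le C'\limsup_{r\to 0}N_H^{r}(f)$, and you then record the conclusion as $\C{E}(f,f)\le C_1^{-1}\limsup_j N_H^{r_j}(f)$ for the specific sequence $r_j=h(t_j)$. That last rewriting reverses an inequality: $\limsup_j N_H^{r_j}(f)\le\limsup_{r\to 0}N_H^{r}(f)$, so a bound by the latter does not yield a bound by the former, and because the annular estimate mixes the scales $2^kr_j$, your argument genuinely cannot pin the estimate to one prescribed sequence. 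In the present paper this is harmless: the sequence $(r_j)$ is never defined, and every downstream use of the theorem (the common domain $\C{F}=W_H$ for all elements of \F{C}, and the bound in Theorem \ref{thm:hilbertbound}) needs only $N_H(f)\asymp\C{E}(f,f)$ together with $\C{F}=W_H$, which your argument does deliver. But you should state your conclusion with $\limsup_{r\to 0}N_H^{r}(f)$ so the claim matches what you prove.
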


The possibility that \F{C} can contain two families of Dirichlets forms with different domains is now excluded. See the comments after Proposition \ref{prop:closedcombo}.

\section{Uniqueness of Brownian Motion}\label{sec:uniq}
Following on Theorem \ref{thm:equivdomain} where it was shown that the elements of \F{C} have a common domain the following definition of the Hilbert projective metric is well-defined. Then Theorem \ref{thm:hilbertbound} shows that in the projective metric \F{C} is a bounded set. Finally Theorem \ref{thm:onedim} shows that \F{C} must them be a one dimensional vector space. 

\begin{definition}\label{def:hilbert}
Let $W = W_{H}$ be as defined above. Let $\C{A},\C{B} \in \F{C}$. We say that $\C{A} \le \C{B}$ if 
\begin{equation}
	\C{B}(u,u) - \C{A}(u,u) \ge 0\ for\ all\ u \in W.
\end{equation}
For $\C{A},\C{B} \in \F{C}$ define
\begin{eqnarray}
	\sup(\C{B}|\C{A}) &=& \sup\left\{\frac{\C{B}(f,f)}{\C{A}(f,f)} : f \in W \right\}\\
	\inf(\C{B}|\C{A}) &=& \inf\left\{\frac{\C{B}(f,f)}{\C{A}(f,f)} : f \in W \right\}\\
	h(\C{A},\C{B}) &=& \log\left(\frac{\sup(\C{B}|\C{A})}{\inf(\C{B}|\C{A})} \right);
\end{eqnarray}
$h$ is Hilbert's projective metric and we have $h(\theta\C{A},\C{B}) = h(\C{A},\C{B})$ for any $\theta \in (0,\infty)$. Note that $h(\C{A},\C{B})=0$ if and only if \C{A} is a nonzero constant multiple of \C{B}.
\end{definition}

\begin{theorem}\label{thm:hilbertbound}
There exists a constant $C_L$, depending only on $L$, such that if $\C{A}.\C{B} \in \F{C}$ then
\begin{equation}
	h(\C{A},\C{B}) \le C_L.
\end{equation}
\end{theorem}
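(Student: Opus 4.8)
The plan is to use the Besov--Lipschitz characterization of the common domain from Theorem \ref{thm:equivdomain} as a single fixed reference functional against which every form in \F{C} is two-sidedly comparable, with constants depending only on $L$. The crucial observation is that the seminorm $N_H$ does not depend on the Dirichlet form at all: it is built solely from the metric measure structure $(L,d,\mu)$ and the time scaling function $H$. Meanwhile, by Theorem \ref{thm:hk}, every $\C{E} \in \F{C}$ satisfies $HK(H,2,2,c_0)$ with one and the same constant $c_0$ depending only on $L$. Feeding this uniform heat kernel estimate into Theorem \ref{thm:equivdomain}, the equivalence constants $C_1,C_2$ appearing there depend only on the heat kernel data and on $H$, hence only on $L$, and are in particular \emph{uniform over all of} \F{C}.

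First I would record that for every $\C{E} \in \F{C}$ and every $f \in W_H$,
\[
	C_1 \C{E}(f,f) \le N_H(f) \le C_2 \C{E}(f,f),
\]
with $C_1,C_2$ depending only on $L$. Given two forms $\C{A},\C{B} \in \F{C}$, applying this once to each and eliminating the common quantity $N_H(f)$ yields
\[
	\frac{C_1}{C_2} \le \frac{\C{B}(f,f)}{\C{A}(f,f)} \le \frac{C_2}{C_1}, \hspace{1cm} f \in W_H,\ \C{A}(f,f) \neq 0.
\]
The only $f$ with $\C{A}(f,f)=0$ are the constants, excluded by irreducibility and conservativity in forming the ratio. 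Taking supremum and infimum over admissible $f$ then gives $\sup(\C{B}|\C{A}) \le C_2/C_1$ and $\inf(\C{B}|\C{A}) \ge C_1/C_2$, so that
\[
	h(\C{A},\C{B}) = \log\!\left( \frac{\sup(\C{B}|\C{A})}{\inf(\C{B}|\C{A})} \right) \le 2\log\!\left( \frac{C_2}{C_1} \right) =: C_L,
\]
a bound depending only on $L$.

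The final estimate is essentially bookkeeping once the uniformity is in hand, so I do not expect a genuine obstacle there. The real content --- and the step I would be most careful about --- is verifying that $c_0$, and consequently $C_1$ and $C_2$, can truly be taken independent of the particular form in \F{C}. This is precisely the payoff of having proved the elliptic Harnack inequality (Theorem \ref{thm:ehi}) and the exit time estimate (Lemma \ref{lem:exittime}) with constants depending only on $L$: it is these $L$-only constants that propagate through Theorem \ref{thm:hkequiv} into a uniform $HK(H,2,2,c_0)$, and then through Theorem \ref{thm:equivdomain} into uniform $C_1,C_2$. Were the heat kernel constants instead allowed to vary with $\C{E}$, the two-sided comparison would still hold for each individual form, but the resulting bound on $h(\C{A},\C{B})$ would depend on $\C{A}$ and $\C{B}$ and the conclusion would fail; so the whole force of the theorem rests on tracking this uniformity through the chain of earlier results.
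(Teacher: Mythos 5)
Your proposal is correct and follows essentially the same route as the paper: both arguments sandwich each form between $C_1^{-1}N_H$ and $C_2^{-1}N_H$ via Theorem \ref{thm:equivdomain}, eliminate the form-independent seminorm $N_H(f)$ to bound the ratio $\C{B}(f,f)/\C{A}(f,f)$ between $C_1/C_2$ and $C_2/C_1$, and conclude $h(\C{A},\C{B}) \le 2\log(C_2/C_1)$. Your explicit tracking of why $C_1,C_2$ are uniform over \F{C} (through the $L$-only constant $c_0$ in Theorem \ref{thm:hk}) and your remark on excluding $\C{A}(f,f)=0$ are details the paper leaves implicit, but the argument is the same.
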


\begin{proof} By Theorem \ref{thm:equivdomain} we have positive, finite constants such that for any $f \in W$
\begin{equation}
	\frac{C_1}{C_2} \le \frac{\C{B}(f,f)}{\C{A}(f,f)} \le \frac{C_2}{C_1}=C_L.
\end{equation}
Which gives crude bounds for $\sup(\C{B}|\C{A})$ and $\inf(\C{B}|\C{A})$ of $C_2/C_1$ and $C_1/C_2$ respetively. Thus $h(\C{A},\C{B}) \le 2\log(C_2/C1)$.
\end{proof}

The following is the main theorem of the paper. It is proven by the same means as Theorem 1.2 of \cite{BBKT2010}.

\begin{theorem}\label{thm:onedim}
The family of $L-$invariant Dirichlet forms, \F{C}, is one dimensional. 
\end{theorem}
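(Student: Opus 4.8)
The plan is to show that the Hilbert projective metric $h$ is not merely bounded on $\F{C}$ (as established in Theorem \ref{thm:hilbertbound}) but is identically zero, which by the final remark in Definition \ref{def:hilbert} forces any two elements of $\F{C}$ to be constant multiples of one another, i.e. $\F{C}$ is one dimensional. The mechanism is a contraction argument: I will show that the linear combination operation of Theorem \ref{thm:combo} and Proposition \ref{prop:closedcombo} strictly shrinks the projective diameter unless the forms already coincide projectively. Concretely, suppose $\C{A}, \C{B} \in \F{C}$ are not scalar multiples of each other, so $h(\C{A},\C{B}) > 0$. After normalizing (using scale-invariance of $h$) I may arrange that $\inf(\C{B}|\C{A}) = 1$, i.e. $\C{A} \le \C{B}$ pointwise on $W$, with $\sup(\C{B}|\C{A}) = e^{h(\C{A},\C{B})} > 1$.

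The key step is to feed $\C{A}$ and $\C{B}$ into Theorem \ref{thm:combo}. For a suitable $\delta > 0$, form $\C{E}_\delta = (1+\delta)\C{B} - \C{A}$; by Proposition \ref{prop:closedcombo} this lies in $\F{C}$ (it is invariant, local, regular, conservative, irreducible, and nonzero for $\delta > 0$). The idea is that $\C{E}_\delta$ is a new element of $\F{C}$ whose ratio against, say, $\C{B}$ is strictly closer to constant than the original pair was, because the subtraction of $\C{A}$ damps precisely the directions in which $\C{B}/\C{A}$ was extremal. One then computes $\sup(\C{E}_\delta | \C{B})$ and $\inf(\C{E}_\delta | \C{B})$ in terms of $\sup(\C{B}|\C{A})$ and $\inf(\C{B}|\C{A})$ and checks that the resulting projective distance obeys a strict contraction, of the form
\begin{equation}
	h(\C{E}_\delta, \C{B}) \le \lambda\, h(\C{A}, \C{B})
\end{equation}
with $\lambda < 1$ depending on $\delta$ and the known bound $C_L$. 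Iterating this construction produces a sequence in $\F{C}$ whose mutual projective distances tend to zero while Theorem \ref{thm:hilbertbound} keeps everything in a bounded region; combined with a completeness or closedness property of $\F{C}$ under projective limits, this squeezes any two forms to be projectively equal, giving $h(\C{A},\C{B}) = 0$ and the contradiction.

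The hard part will be establishing the strict contraction constant $\lambda < 1$ honestly, rather than merely a non-strict inequality. The subtlety is that Birkhoff's contraction theorem in Hilbert's projective metric applies to positive linear maps and gives a contraction ratio expressed through $\tanh$ of the projective diameter; here I must verify that the operation $\C{A} \mapsto (1+\delta)\C{B} - \C{A}$ genuinely behaves as such a positive cone map on the cone of invariant forms, and that the projective diameter of the image is finite (which is exactly where Theorem \ref{thm:hilbertbound} is indispensable). A secondary obstacle is controlling the dependence on $\delta$: one wants a single $\delta$ (or an optimized choice) for which the contraction is uniform, so that the iteration converges geometrically. Finally, one must ensure the limiting form is itself nondegenerate and lies in $\F{C}$, so that the conclusion $h = 0$ is about genuine elements of the family and not a boundary object; here irreducibility and conservativity from Theorem \ref{thm:combo} do the work of keeping the limit inside the cone.
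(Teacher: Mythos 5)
There is a genuine gap here, and it is structural rather than technical: your contraction mechanism runs in the wrong direction, and even if it ran, it would not yield the conclusion. First, the claimed inequality $h(\C{E}_\delta,\C{B}) \le \lambda\, h(\C{A},\C{B})$ with $\lambda<1$ is false in the regime that matters. Normalize $\inf(\C{B}|\C{A})=1$ (so $\C{A}\le\C{B}$) and write $M=e^{h(\C{A},\C{B})}$; a direct computation with $\C{E}_\delta=(1+\delta)\C{B}-\C{A}$ gives
\begin{equation}
	e^{h(\C{E}_\delta,\C{B})} \;=\; \frac{(1+\delta)-\frac{1}{M}}{\delta} \;=\; \frac{(1+\delta)M-1}{\delta M},
\end{equation}
which is $<M$ if and only if $M>1/\delta$. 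So for fixed $\delta$ the operation contracts only when $h(\C{A},\C{B})>\log(1/\delta)$ and \emph{expands} projective distances below that threshold; iterating it (with renormalization at each step) drives the distance to the nonzero attracting fixed value $\log(1/\delta)$, not to $0$. Your intuition that subtracting $\C{A}$ ``damps the extremal directions'' is backwards for small $\delta$: where $\C{B}/\C{A}$ is near its infimum the subtraction nearly cancels $\C{E}_\delta$, so the map amplifies the relative oscillation. Birkhoff's theorem cannot rescue this, since $\C{A}\mapsto(1+\delta)\C{B}-\C{A}$ is an order-reversing affine map, defined on the cone only after the normalization $\C{A}\le\C{B}$, not a positive linear operator. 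Second, and more fundamentally, even a genuine contraction would prove nothing: producing iterates $\C{A}_n\in\F{C}$ with $h(\C{A}_n,\C{B})\rightarrow 0$ says nothing about $h(\C{A}_0,\C{B})$, because neither $\C{A}$ nor $\C{B}$ is a fixed point of your map and there is no fixed-point-uniqueness statement in play. Contraction is perfectly compatible with boundedness, so Theorem \ref{thm:hilbertbound} creates no tension in your scheme; and the ``completeness of $\F{C}$ under projective limits'' you invoke is proved nowhere in the paper and would itself be a substantial task (closedness, Markovianity, irreducibility and invariance would all have to survive the limit).

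The paper's proof uses exactly the opposite mechanism: \emph{expansion} against the uniform bound. With $\lambda=\inf(\C{B}|\C{A})$, set $\C{C}_\delta=(1+\delta)\C{B}-\lambda\C{A}$, which lies in $\F{C}$ for every $\delta>0$ by Theorem \ref{thm:combo} and Proposition \ref{prop:closedcombo}, and compute
\begin{equation}
	e^{h(\C{A},\C{C}_\delta)} \;=\; \frac{(1+\delta)\sup(\C{B}|\C{A})-\lambda}{\delta\lambda} \;\ge\; \frac{1}{\delta}\left(e^{h(\C{A},\C{B})}-1\right).
\end{equation}
If $h(\C{A},\C{B})>0$, the right-hand side blows up as $\delta\downarrow 0$ while every $\C{C}_\delta$ stays in $\F{C}$, contradicting $h\le C_L$ from Theorem \ref{thm:hilbertbound}; hence $h(\C{A},\C{B})=0$ and $\F{C}$ is one dimensional (after noting, via Lemma \ref{lem:DFinC}, that $\F{C}\neq\emptyset$). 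No iteration, no limit object, and no completeness of $\F{C}$ are needed: a single application of the combination, with $\delta$ tending to $0$, does all the work. The expansion you would have had to fight in your contraction scheme is precisely the engine of the correct proof.
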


\begin{proof} By Lemma \ref{lem:DFinC} \F{C} is non-empty. 

Let $\C{A},\C{B} \in \F{C}$ and $\lambda = \inf(\C{B}|\C{A})$. Let $\delta >0$ and $\C{C} = (1+\delta)\C{B} - \lambda\C{A}$. By Theorem \ref{thm:combo} \C{C} is a local regular Dirichlet form on $L^{2}(L,\mu_{\infty})$ and $\C{C} \in \F{C}$.  Since
\begin{equation}
	\frac{\C{C}(f,f)}{\C{A}(f,f)} = (1+\delta) \frac{\C{B}(f,f)}{\C{A}(f,f)} - \lambda
\end{equation}
for $f \in W$, we obtain
\begin{equation}
	\sup(\C{C}|\C{A}) = (1+\delta)\sup(\C{B}|\C{A}) - \lambda,
\end{equation}
and
\begin{equation}
	\inf(\C{C}|\C{A}) = (1+\delta)\inf(\C{B}|\C{A}) - \lambda = \delta \lambda.
\end{equation}
Hence for any $\delta>0$,
\begin{equation}
	e^{h(\C{A},\C{C})} = \frac{(1+\delta)\sup(\C{B}|\C{A}) - \lambda}{\delta\lambda} \ge \frac{1}{\delta}(e^{h(\C{A},\C{B})}-1).
\end{equation}
If $h(\C{A},\C{B})>0$, this is not bounded as $\delta \rightarrow 0$, contradicting Theorem \ref{thm:hilbertbound}. Therefore $h(\C{A},\C{B}) = 0.$
\end{proof}

\begin{cor}
If $\C{E} \in \F{C}$, then there exists a non-zero constant $c$ such that $\C{E} = c\C{E}_M$.
\end{cor}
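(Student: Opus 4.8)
The plan is to read this corollary as a direct translation of Theorem \ref{thm:onedim} into the language of a single distinguished form, so the real work has already been done. First I would fix $\C{E}_M$ to be the specific $L$-invariant Dirichlet form produced by the existence result Lemma \ref{lem:DFinC} (the form constructed in Section \ref{sec:MGUG-Lap} from the Laplacian of \cite{Steinhurst2010}). By that lemma $\C{E}_M \in \F{C}$, and in particular $\C{E}_M$ is non-zero, since every member of $\F{C}$ is required to be non-zero in the defining condition of Definition \ref{def:invar}.

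Next, given an arbitrary $\C{E} \in \F{C}$, I would invoke Theorem \ref{thm:onedim}. The operative content of that theorem, as extracted in its proof, is that $h(\C{A},\C{B}) = 0$ for every pair $\C{A},\C{B} \in \F{C}$, where $h$ is Hilbert's projective metric. Applying this to the pair $\C{E}$ and $\C{E}_M$ gives $h(\C{E},\C{E}_M) = 0$.

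Finally, I would appeal to the characterization of equality in the projective metric recorded at the end of Definition \ref{def:hilbert}: the relation $h(\C{E},\C{E}_M) = 0$ holds if and only if $\C{E}$ is a non-zero constant multiple of $\C{E}_M$. This produces a scalar $c$ with $\C{E} = c\,\C{E}_M$, which is exactly the claim; moreover $c \neq 0$ automatically, because $\C{E}$ and $\C{E}_M$ are both non-zero elements of $\F{C}$, so no separate argument for non-vanishing of $c$ is required.

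I do not expect a genuine obstacle here. The only point deserving care is notational rather than mathematical: pinning down $\C{E}_M$ as the distinguished element coming from the existence construction and confirming it lies in $\F{C}$, so that the abstract one-dimensionality of $\F{C}$ in Theorem \ref{thm:onedim} can be phrased relative to a concrete reference form. Everything else is read off directly from Theorem \ref{thm:onedim} and Definition \ref{def:hilbert}.
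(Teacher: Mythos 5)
Your proposal is correct and follows exactly the route the paper intends: the corollary is stated without proof precisely because it is the immediate combination of Lemma \ref{lem:DFinC} (placing $\C{E}_M$ in $\F{C}$), Theorem \ref{thm:onedim} (whose proof shows $h(\C{A},\C{B})=0$ for all pairs in $\F{C}$), and the characterization in Definition \ref{def:hilbert} that $h(\C{E},\C{E}_M)=0$ holds if and only if $\C{E}$ is a non-zero constant multiple of $\C{E}_M$. No gaps; the care you take in fixing $\C{E}_M$ as the concrete reference form is exactly the right (and only) point to be explicit about.
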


\section{Minimal Generalized Upper Gradient Laplacian}\label{sec:MGUG-Lap}
In \cite{Laakso2000} the existence of minimal generalized upper gradients is used to show that there is a weak $(1,1)-$Poincar\'e inequality on Laakso spaces. Cheeger, in \cite{Cheeger1999} had already shown how minimal generalized upper gradients can be used in a version of Sobolev theory as replacements for derivatives. Here we use them to exhibit an $L-$invariant Dirichlet form constructed in \cite{Steinhurst2010} and shown to be $L-$invariant in this paper. 

This Laplacian and its spectrum were analyzed in \cite{RomeoSteinhurst2009,BegueEtAl2009}. In those papers the spectrum was computed along with the dimension of each eigenspace. The Hausdorff, walk, and spectral dimensions were computed for every Laakso space with $k=1$ in \cite{BegueEtAl2009}.

\begin{definition}\label{def:mgug}
In a metric measure space, $(X,d,\mu)$, a generalized upper gradient of a continuous function $f \in C(X)$ is a function $p_f$ taking values in $[0,\infty]$ with the following property:
\begin{equation}
	\left| f(x) - f(y) \right| \le \int_{\gamma} p_f\ dm
\end{equation}
Where $x,y \in X$ and $\gamma:[0,1] \rightarrow X$ is any rectifiable path from $x$ to $y$, and $dm$ is the measure induced on the image of $\gamma$ by the arc length parameterization of $\gamma$. Such a function $p_f$ exists for any $f \in C(X)$ because $p_f = \infty$ is a generalized upper gradient for every continuous function.

A minimal generalized upper gradient (MGUG) is a generalized upper gradient that is $\mu-$almost everywhere less than or equal to all other generalized upper gradients. 
\end{definition}

Many of the basic properties of minimal generalized upper gradients, including the fact that the space of square integrable functions with square integrable minimal generalized upper gradients and is a closable space is shown in \cite{Cheeger1999}. The following proposition is one of the main resultes of \cite{Steinhurst2010}.

\begin{prop}\label{prop:mgugDF}
There exists a local, regular Dirichlet form, $(\C{E}_M,\C{F}_M)$, such that for $f \in \C{F}_M$
\begin{equation}
	\C{E}_M(f,f) = \int_L p_f^{2}\ d\mu.
\end{equation}
This Dirichlet form will be refered to as the \emph{MGUG Dirichlet form} when it needs to be distinguished.
\end{prop}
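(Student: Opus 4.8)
The plan is to construct the form $(\C{E}_M, \C{F}_M)$ explicitly from the minimal generalized upper gradient, then verify the three properties---that it is a Dirichlet form, that it is local, and that it is regular---in turn. First I would define the candidate domain as $\C{F}_M = \{ f \in L^2(L,\mu) \cap C(L) : p_f \in L^2(L,\mu)\}$, where $p_f$ is the MGUG of $f$ guaranteed to exist by Definition \ref{def:mgug}, and set $\C{E}_M(f,f) = \int_L p_f^2\, d\mu$. To get the bilinear form from this quadratic form, I would use polarization; the key input is that the MGUG is subadditive and positively homogeneous (these are standard consequences of the integral inequality in Definition \ref{def:mgug}, since $p_{f+g} \le p_f + p_g$ and $p_{cf} = |c|\,p_f$ almost everywhere), which makes $f \mapsto \left(\int_L p_f^2\,d\mu\right)^{1/2}$ a seminorm and hence yields a genuine bilinear form.

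The central structural fact I would invoke is Cheeger's theorem from \cite{Cheeger1999}, cited in the text immediately before the proposition: the space of square-integrable functions with square-integrable minimal generalized upper gradients is \emph{closable}. Closability is exactly what promotes the form from a mere pre-form to a closed form, so after taking the closure I obtain a closed symmetric form on $L^2(L,\mu)$. The Markov property would follow from the contraction property of the MGUG under the unit normal contraction: if $\bar f = (0 \vee f) \wedge 1$ is the truncation of $f$, then any rectifiable path gives $|\bar f(x) - \bar f(y)| \le |f(x)-f(y)| \le \int_\gamma p_f\,dm$, so $p_f$ is itself a generalized upper gradient for $\bar f$, whence $p_{\bar f} \le p_f$ almost everywhere and $\C{E}_M(\bar f, \bar f) \le \C{E}_M(f,f)$. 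This establishes that $(\C{E}_M,\C{F}_M)$ is a Dirichlet form.

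For locality, I would argue that if $f,g \in \C{F}_M$ have disjoint compact supports, then on any rectifiable path the integrand contributions do not overlap, so the cross term in the polarized form vanishes, giving $\C{E}_M(f,g)=0$; this matches the definition of local from the Definition in Section \ref{ssec:generalities}. For regularity, I would exhibit a dense core: since $L$ is a compact geodesic space (Proposition \ref{prop:laaksoproperties}), Lipschitz functions are dense in $C(L)$, they lie in $\C{F}_M$ (a Lipschitz function has a bounded MGUG, hence square-integrable against the finite measure $\mu$), and they are dense in $\C{F}_M$ in the form norm---the last density being the content that regularity demands.

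\emph{The main obstacle} I expect is the regularity claim, specifically the density of the Lipschitz (or continuous) core in $\C{F}_M$ with respect to the graph norm $\C{E}_M(f,f) + \|f\|_{L^2}^2$, rather than merely in the uniform norm. Verifying that the MGUG behaves well under the approximation---that one can approximate an arbitrary $f \in \C{F}_M$ by Lipschitz functions whose MGUGs converge in $L^2$---is the delicate analytic point and is where the detailed structure of the Laakso space and Cheeger's Sobolev-space machinery must be used carefully. Since the text states this proposition is "one of the main results of \cite{Steinhurst2010}," the honest route is to import the construction and its closability/regularity verification from that reference and confine the present proof to recording the statement and the essential properties used downstream.
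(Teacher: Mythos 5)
The paper itself offers no proof of this proposition: it is imported wholesale from \cite{Steinhurst2010} (``one of the main results of \cite{Steinhurst2010}''), so the concluding recommendation in your last paragraph --- record the statement and cite the reference for the construction --- is exactly what the paper does. The issue is with the direct construction you sketch before that concession, which contains a genuine gap at the polarization step. Subadditivity ($p_{f+g} \le p_f + p_g$) and homogeneity ($p_{cf} = |c|\,p_f$) do make $f \mapsto \bigl(\int_L p_f^2\, d\mu\bigr)^{1/2}$ a seminorm, but the square of a seminorm is a quadratic form (and hence polarizes to a bilinear form) only if it satisfies the parallelogram identity, and that does \emph{not} follow from these two properties. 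This is not a pedantic point: for a general metric measure space the Cheeger energy $f \mapsto \int p_f^2\, d\mu$ is convex and $2$-homogeneous but genuinely non-quadratic (take $\mathbb{R}^2$ with the $\ell^\infty$ metric and Lebesgue measure); the spaces where it is quadratic form a special class, and membership in that class is a theorem, not an axiom of upper gradients.

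For Laakso spaces the quadraticity does hold, but it rests on the structural identification of the MGUG that comes out of \cite{Steinhurst2010}, namely $p_f = \left| \frac{\partial f}{\partial x} \right|$ in the interval direction --- a fact this paper itself invokes later, in the proof of Lemma \ref{lem:DFinC}. Once you have that formula, $\C{E}_M$ is visibly the quadratic form of a one-dimensional energy, bilinearity is immediate, locality follows from locality of the derivative (your path-overlap argument for locality is otherwise too vague, since $p_{f\pm g}$ is defined through global path inequalities, and one needs the locality property of MGUGs from \cite{Cheeger1999} or the explicit formula to make it rigorous), and the Markov and closability arguments you give (which are correct as stated) complete the picture. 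So the honest summary is: your sketch is recoverable, but only after replacing the false ``seminorm $\Rightarrow$ polarization'' step by the Laakso-specific structure theory of the reference --- which is to say, the proposition really cannot be proven from Definition \ref{def:mgug} alone, and the citation is doing essential work.
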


\begin{lemma}\label{lem:DFinC}
The Dirichlet form $(\C{E}_M, \C{F}_M)$ is in $\F{C}$ (see Definition \ref{def:invar}).
\end{lemma}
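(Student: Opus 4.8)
The goal is to verify that the MGUG Dirichlet form $(\C{E}_M,\C{F}_M)$ of Proposition \ref{prop:mgugDF} satisfies the three conditions of Definition \ref{def:invar}, together with being non-zero, local, regular, and conservative (so that it lands in $\F{C}$). Locality and regularity are already granted by Proposition \ref{prop:mgugDF}, and non-triviality is clear since linear functions along the interval direction have nonzero MGUG; conservativity follows because $\mu(L)<\infty$ and constants have MGUG equal to $0$, so $1 \in \C{F}_M$ with $\C{E}_M(1,1)=0$. The substance is therefore the three invariance conditions, and the plan is to reduce all of them to a single geometric fact: the MGUG transforms correctly under the folding maps $\phi_S$.

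The key step is to understand how the minimal generalized upper gradient behaves under composition with the isometries $\phi_S$ and the unfolding operator $U_S$. Since each $\phi_S \colon S' \to S$ is an isometry by Lemma \ref{lem:phiprop}(1), and the defining inequality $|f(x)-f(y)| \le \int_\gamma p_f\, dm$ in Definition \ref{def:mgug} is phrased entirely in terms of rectifiable paths and arc-length, isometries must send generalized upper gradients to generalized upper gradients and preserve minimality $\mu$-a.e. Concretely, I would first establish the pointwise transformation rule $p_{U_S g} = (p_g)\circ \phi_S$ (equivalently $p_{f\circ\phi_S} = p_f \circ \phi_S$), using that $\phi_S$ is a local isometry away from the wormhole identifications and that rectifiable paths pull back to rectifiable paths of the same length. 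Condition (1) of Definition \ref{def:invar} — that $U_S R_S f \in \C{F}_M$ for $f \in \C{F}_M$ — then follows because $p_{U_S R_S f}(x) = p_f(\phi_S(x))$ is square-integrable: the measure $\mu$ is distributed symmetrically across the $m_L^n$ cells of $\C{S}_n$, so pulling back to $S$ and unfolding only rearranges mass among congruent cells.

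Condition (2), isometry-invariance of the rescaled energies $\C{E}^{S_1}(f\circ\tilde\Phi,\cdot) = \C{E}^{S_2}(f,\cdot)$, is where the normalization by $m_L^n$ in the definition of $\C{E}^S$ does its work. Because $\tilde\Phi = \iota\circ\Phi\circ\iota^{-1}$ is induced by a genuine isometry $\Phi$ of $\B{R}^{1+k}$ carrying $S_1$ onto $S_2$, the transformation rule for $p$ gives $p_{U_{S_1}(f\circ\tilde\Phi)} = p_{U_{S_2}f}\circ(\text{isometry})$, and since an isometry preserves both the integrand $p^2$ and the measure $\mu$, the integrals defining $\C{E}^{S_1}$ and $\C{E}^{S_2}$ coincide once divided by the common factor $m_L^n$. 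Condition (3), the additivity $\C{E}_M(f,f) = \sum_{S\in\C{S}_n}\C{E}^S(R_Sf,R_Sf)$, I would prove by decomposing the single integral $\int_L p_f^2\,d\mu$ over the cells of $\C{S}_n$ and matching each piece $\int_S p_f^2\,d\mu$ against $\frac{1}{m_L^n}\int_L p_{U_S R_S f}^2\,d\mu = \frac{1}{m_L^n}\int_L (p_f\circ\phi_S)^2\,d\mu$; here the point is that unfolding replicates the restriction of $f$ across all $m_L^n$ cells congruently, so the unfolded integral over $L$ equals $m_L^n$ times the integral over the single cell $S$, and summing over $S$ reassembles $\int_L p_f^2\,d\mu$.

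The main obstacle will be the behavior of the MGUG at the wormhole identifications, i.e.\ at the points of $L_n$ where branches are glued. There $\phi_S$ is not a local homeomorphism in the naive sense, and one must check that the transformation rule $p_{f\circ\phi_S} = p_f\circ\phi_S$ survives across the identified points — that no spurious gradient mass is created or lost on the $\mu$-null skeleton where the folding is many-to-one. I expect this to be handled by invoking Lemma \ref{lem:phiprop}: parts (3) and (4) guarantee that the folding is consistent across cells and refines correctly, so that the identifications respected by $\phi_S$ are exactly those built into $L$ itself, and the gluing set has $\mu$-measure zero and does not contribute to any of the integrals. Once the transformation rule is secured $\mu$-a.e., all three conditions of Definition \ref{def:invar} reduce to the change-of-variables and mass-counting bookkeeping sketched above.
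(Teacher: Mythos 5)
Your plan---derive all three invariance conditions from a single pointwise transformation rule $p_{f\circ\phi_S}=p_f\circ\phi_S$ for the folding maps---is a genuinely different route from the paper's. The paper never argues from the metric definition of the MGUG at all: it takes from \cite{Steinhurst2010} the dense family $\C{D}_m\subset\C{F}_M$ of functions that are continuous, piecewise differentiable in the interval direction, and locally constant in the Cantor direction, on which the MGUG is explicitly $p_f=\left|\frac{\partial f}{\partial x}\right|$. Conditions (1)--(3) of Definition \ref{def:invar} are then checked on $\C{D}_m$ by direct computation (condition (2) because $\left|\frac{\partial f}{\partial x}\right|$ is unchanged under translations and reflections of $\B{R}^{1+k}$, condition (3) because $\mu$ gives no mass to intersections of cells), and invariance passes to all of $\C{F}_M$ because $R_S$ and $U_S$ are bounded operators and $\bigcup_m\C{D}_m$ is dense. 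Your cell-by-cell measure bookkeeping for (2) and (3) is essentially the same accounting; what differs is the foundation it rests on. You also check non-triviality and conservativity explicitly ($1\in\C{F}_M$ with $\C{E}_M(1,1)=0$), which the paper's proof leaves implicit; that part of your write-up is fine.

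The gap is at your crux claim $p_{U_Sg}=p_g\circ\phi_S$, which your argument only half-proves. The folding $\phi_S$ is not an isometry of $L$: it is many-to-one and isometric only cell by cell (Lemma \ref{lem:phiprop}(1)), so the general principle ``isometries preserve minimal upper gradients'' does not apply to it. Your path-pullback argument (rectifiable paths map to rectifiable paths of the same length) proves exactly one inequality: $p_g\circ\phi_S$ is \emph{an} upper gradient of $U_Sg$, hence $p_{U_Sg}\le p_g\circ\phi_S$ $\mu$-a.e. The reverse inequality---that unfolding cannot strictly decrease the minimal upper gradient---is the real content, and no argument about paths alone yields it; it requires locality of the MGUG (Cheeger's theorem that functions agreeing on an open set have a.e.\ equal MGUGs there, \cite{Cheeger1999}), applied on cell interiors through the cell-wise isometries $\phi_S|_{S'}$, together with the $\mu$-nullity of the fold set. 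The tools you propose for this obstacle, Lemma \ref{lem:phiprop}(3),(4), are consistency statements about the folding maps and say nothing about upper gradients, so they cannot close it. With only the inequality you actually have, you obtain $\sum_{S\in\C{S}_n}\C{E}^S(R_Sf,R_Sf)\le\C{E}_M(f,f)$ rather than the equality demanded by Definition \ref{def:invar}(3), and for condition (2) you get that both rescaled energies are bounded above by a common quantity rather than that they are equal. Either supply the locality theorem for MGUGs to get the missing inequality, or sidestep the issue entirely as the paper does by computing on the dense class where $p_f$ is explicit and extending by density.
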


\begin{proof}
We first describing a dense subset of $\C{F}_M$ on which invariance can be checked and then we claim by density that $\C{E}_M$ is $L-$invariant on its entire domain. Denote by $\C{D}_n$ those functions on $L$ that are continuous, for given $w \in L^{k}$ are piecewise differentiable on the line segment $\iota(I \times w)$, and for given $x \in I$ is constant on each $n'th$ level cell of $\iota(x \times K^{k}) \simeq K^{k}$. In \cite{Steinhurst2010} it is shown that
\begin{equation}
	\overline{ \bigcup_{n=0}^{\infty} \C{D}_n } = \C{F}_M.
\end{equation}

Fix an $S \in \C{S}_n$ and $f \in \C{D}_m$, then $R_S f$ is a function on the cell $S$ and then when $U_S$ is applied to unfold $R_Sf$ to the entire space we get that it is continuous, piecewise differentiable on each $\iota(I \times w)$ for all $w \in K^{k}$, and constant on $m^{th}$ level cells of $\iota( x \times K^{k})$ for all $x \in I$ if $m > n$ and constant on $n^{th}$ level cells if $n \ge m$. This means that $U_SR_S f \in \C{D}_m$. Since both $R_S$ and $U_S$ have operator norm less than one and have domains containing all of $\C{F}_M$ they are continuous operators on $\C{F}_M$ so the invariance extends to all of $\C{F}_M$. This is part 1 of Definition \ref{def:invar}.

We next check part 2 of Definition \ref{def:invar}. Let $S_1,S_2 \in \C{S}_n$ for some $n \ge 0$ and $\tilde{\Phi} = \iota \circ \Phi \circ \iota^{-1}$ and isometry mapping $S_1$ onto $S_2$. If $f \in \C{F}^{S_2}$ the we claim that $p_{f \circ \tilde{\Phi}} = p_f \circ \tilde{\Phi}$. Given this claim and the definition of $\C{E}_M$ as the integral of $p_f^{2}$ we then have 
\begin{equation}
	\C{E}_M^{S_1}(f \circ \tilde{\Phi},f \circ \tilde{\Phi}) = \C{E}_m^{S_2}(f,f).
\end{equation}
We check the claim on $\C{D}_m$ for arbitrary $m \ge 0$, the claim extending to all of $\C{F}_M$ by density. On $\C{D}_m$ we can view $f \circ \iota$ as a piece-wise differentiable function in the $I$ coordinate and piecewise constant in the $K^{k}$ coordinate. Since $\Phi$ is an isometry of $\B{R}^{1+k}$ that maps $\iota^{-1}(S_1)$ onto $\iota^{-1}(S_2)$ it must either be a composition of reflections and translations. In the case of $\Phi$ being a translation then the claim holds because differentiation commutes with translation. In the case of $\Phi$ being a reflection differentiation is anti-commutative, but since the minimal generalized upper gradient is $p_f = \left| \frac{\partial f}{\partial x} \right|$ the claim holds. Then in the case of composition of translation and reflection the claim holds.

Part 3 of Definition \ref{def:invar} holds because $\C{E}_M$ is given by an integral with respect to a measure that givens no mass to the intersection of cells.
\end{proof}

\small
\bibliography{heatkernelbib}{}
\bibliographystyle{plain}

\end{document}